\numberwithin{equation}{section}
\newtheorem{thm}{Theorem}[section]
\newtheorem{cor}[thm]{Corollary}
\newtheorem{lem}[thm]{Lemma}
\begin{document}

\title{Volume growth and spectrum for general graph Laplacians}
\author{Matthew Folz\thanks{Department of Mathematics, The University of British Columbia, 1984 Mathematics Road, Vancouver, B.C., Canada, V6T 1Z2.  {\tt mfolz@math.ubc.ca}.  Research supported by a NSERC Alexander Graham Bell Canada Graduate Scholarship and a NSERC Michael Smith Foreign Study Supplement.}}
\maketitle

\begin{abstract}
\noindent  We prove estimates relating exponential or sub-exponential volume growth of weighted graphs to the bottom of the essential spectrum for general graph Laplacians.  The volume growth is computed with respect to a metric adapted to the Laplacian, and use of these metrics produces better results than those obtained from consideration of the graph metric only.  Conditions for absence of the essential spectrum are also discussed.  These estimates are shown to be optimal or near-optimal in certain settings and apply even if the Laplacian under consideration is an unbounded operator. \\

\noindent {\it Key words:} Bottom of spectrum, graph Laplacian, volume growth, random walk. \\
\noindent {\it AMS 2010 Subject Classification:} Primary: 47A10, 31C25, 60G50.
\end{abstract}

\section{Introduction} \label{S1}

Let $\Gamma=(G,E)$ be an unoriented, connected, countably infinite, locally finite graph.  We assume that $\Gamma$ has neither loops nor multiple edges.  We use $d$ to denote the graph metric on $\Gamma$; given $x,y\in G$, $d(x,y)$ is equal to the number of edges in a shortest (geodesic) path between $x$ and $y$. \\

We assume that $\Gamma$ is a weighted graph, so that associated with each $(x,y)\in G\times G$ is a nonnegative edge weight $\pi_{xy}$ which is symmetric ($\pi_{xy}=\pi_{yx}$ for $x,y\in G$) and satisfies $\pi_{xy}>0$ if and only if $\{x,y\}\in E$.  The edge weights define a measure on $G$ by setting $\pi_x := \pi(\{x\}) := \sum_{y\in G} \pi_{xy}$ for $x\in G$, and extending to all subsets of $G$ by countable additivity.  If $\pi(e)=1$ for all $e\in E$, we say that $\Gamma$ has the standard weights.  We denote weighted graphs by the pairing $(\Gamma, (\pi(e))_{e\in E})$; for brevity we write this as $(\Gamma,\pi)$. \\

Let $(\theta_x)_{x\in G}$ be an additional set of positive vertex weights.  We make the additional assumption that if $U\subset G$ is infinite and connected,

\begin{equation} \label{thetalb}
\sum_{x\in U} \theta_x = \infty.
\end{equation}
\\
The relevance of this condition will be discussed subsequently. \\

We will work on the (real) Hilbert space $L^2(\theta)$ with inner product

\begin{equation*}
\langle f,g\rangle_{L^2(\theta)} := \sum_{x\in G} f(x)g(x)\theta_x.
\end{equation*}
\\
The central object of our analysis will be the operator $\mathcal{L}_\theta$, which is defined pointwise for $f\in L^2(\theta)$ by

\begin{equation*}
(\mathcal{L}_\theta f)(x) := \frac{1}{\theta_x}\sum_{y\sim x}\pi_{xy}(f(y)-f(x)),
\end{equation*}
\\
and which has domain of definition $\mathcal{D}(\mathcal{L}_\theta) := \{f\in L^2(\theta):\mathcal{L}_\theta f\in L^2(\theta)\}$. \\

This operator is associated with the Dirichlet form $(\mathcal{E},\mathcal{D}(\mathcal{E}))$, where, for $f,g\in\mathcal{D}(\mathcal{E})$,

\begin{equation*}
\mathcal{E}(f,g) := \frac{1}{2}\sum_{x,y\in G} \pi_{xy}(f(y)-f(x))(g(y)-g(x)).
\end{equation*}
\\
We define $\mathcal{E}_1(f,f) := (\langle f,f\rangle_{L^2(\theta)} + \mathcal{E}(f,f))^{1/2}$ and let $\|\cdot\|_{H^1}$ denote the corresponding norm.  The domain $\mathcal{D}(\mathcal{E})$ is given by the closure of $C_c(G)$ in the norm $\|\cdot\|_{H^1}$.  Consequently $(\mathcal{E},\mathcal{D}(\mathcal{E}))$ is a closed, regular Dirichlet form.  See \cite{FOT} for more on the theory of Dirichlet forms. \\

{\bf Remark:}  The condition \eqref{thetalb} ensures that the generator associated with the regular Dirichlet form $(\mathcal{E},\mathcal{D}(\mathcal{E}))$ is the operator $\mathcal{L}_\theta$ on $\{f\in L^2(\theta):\mathcal{L}_\theta f \in L^2(\theta)\}$; see Theorem 5 of \cite{KL}. \\

The generator and Dirichlet form are linked by the Gauss-Green identity; for $f\in \mathcal{D}(\mathcal{L}_\theta)$ and $g\in \mathcal{D}(\mathcal{E})$,

\begin{equation} \label{GG}
\mathcal{E}(f,g) = -\langle\mathcal{L}_\theta f,g\rangle_{L^2(\theta)}.
\end{equation}
\\
In particular, $\langle -\mathcal{L}_\theta f,f\rangle_{L^2(\theta)} = \mathcal{E}(f,f) \geq 0$, and the operator $-\mathcal{L}_\theta$ is positive semidefinite. \\

In general, $\mathcal{L}_\theta$ is not a bounded operator on $L^2(\theta)$.  Define

\begin{equation*}
A(\Gamma,\pi,\theta) := \sup_{x\in G} \frac{\pi_x}{\theta_x}.
\end{equation*}
\\
Then $A(\Gamma,\pi,\theta) \leq \|\mathcal{L}_\theta\| \leq 2A(\Gamma,\pi,\theta)$, and $\mathcal{L}_\theta$ is bounded if and only if $A(\Gamma,\pi,\theta)<\infty$.  See Lemma 1 of \cite{D2} for a proof.  For simplicity of notation, we will write $\|\cdot\|$ for the operator norm on $L^2(\theta)$.  Additionally, for this quantity and subsequent quantities which depend on the edge weights $(\pi(e))_{e\in E}$, if the edge weights under consideration are the standard weights, we will simply write omit reference to the edge weights and write (for example) $A(\Gamma,\theta)$. \\

Each choice of $(\theta_x)_{x\in G}$ corresponds to a continuous-time simple random walk on $(\Gamma,\pi)$ with infinitesimal generator $\mathcal{L}_\theta$, which we denote by $(X^\theta_t)_{t\geq 0}$.  This process is strong Markov; started at a vertex $x\in G$, it waits an exponentially distributed time with parameter $\pi_x/\theta_x$ and then jumps to a neighbour with jump probabilities $P(x,y) := \pi_{xy}/\pi_x$.  In particular, modifying the vertex weights $(\theta_x)_{x\in G}$ induces a time-change of the random walk; the jump times change but not the jump probabilities. \\

We single out two choices of the vertex weights $(\theta_x)_{x\in G}$.  The first is the choice $(\pi_x)_{x\in G}$.  The corresponding random walk is referred to as the constant-speed random walk (because it jumps at exponentially distributed times with mean $1$), and has infinitesimal generator

\begin{equation*}
(\mathcal{L}_\pi f)(x) := \frac{1}{\pi_x}\sum_{y\sim x}\pi_{xy}(f(y)-f(x)).
\end{equation*}
\\
This operator is sometimes called the probabilistic or normalized Laplacian; it is a bounded operator on $L^2(\pi)$ satisfying $1\leq \|\mathcal{L}_\pi\| \leq 2$. \\

The second choice are the vertex weights $(\mathds{1}_x)_{x\in G}$, where ${\mathds{1}}_x := 1$ for each $x\in G$.  The corresponding random walk is called the variable-speed random walk (because it jumps at exponentially distributed times with parameter $\pi_x$), and has infinitesimal generator 

\begin{equation*}
(\mathcal{L}_{\mathds{1}}f)(x) := \sum_{y\sim x} \pi_{xy}(f(y)-f(x)).
\end{equation*}
\\
This operator is sometimes called the physical or unnormalized Laplacian, and is a bounded operator on $L^2(\mathds{1})$ if and only if the weights $(\pi_x)_{x\in G}$ are bounded above. \\

Associated with the process $(X^\theta_t)_{t\geq 0}$ is the semigroup $(P^\theta_t)_{t\geq 0}$, where $P^\theta_t = \exp(t\mathcal{L}_\theta)$ (or, equivalently, $P^\theta_tf(x) := \mathbb{E}^xf(X_t)$).  The semigroup has a density $p^\theta_t(x,y)$ with respect to the measure $\theta$ given by

\begin{equation*}
p_t(x,y) := \frac{1}{\theta_y}\mathbb{P}^x(X^\theta_t=y).
\end{equation*}
\\
This density is also called the heat kernel of $(X^\theta_t)_{\geq 0}$. \\

\subsection{Metrics and Volume}

{\bf Definition:} The metric $\rho$ is adapted to the operator $\mathcal{L}_\theta$ on $(\Gamma,\pi)$ if for all $x\in G$, 

\begin{equation}\label{adaptedness}
\frac{1}{\theta_x}\sum_{y\sim x}\pi_{xy}\rho^2(x,y) \leq 1,
\end{equation}
\\
and there exists $c_\rho>0$ such that $\rho(x,y) \leq c_\rho$ whenever $x\sim y$. \\

The following metrics are useful examples of adapted metrics: \\

{\bf 1.} On any weighted graph, the graph metric $d$ is adapted to the operator $\mathcal{L}_\pi$.  More generally, if $\mathcal{L}_\theta$ is bounded, (so that $A(\Gamma,\pi,\theta)<\infty$), then the metric $\frac{1}{\sqrt{A(\Gamma,\pi,\theta)}}d$ is adapted to $\mathcal{L}_\theta$. \\

No fixed multiple of the graph metric can be adapted to $\mathcal{L}_\theta$ if $\mathcal{L}_\theta$ is an unbounded operator.  In this case, since there is no uniform upper bound on $\pi_x/\theta_x$, given a metric $\rho$ adapted to $\mathcal{L}_\theta$, for any $\varepsilon>0$, there exist adjacent vertices $x$ and $y$ with $\rho(x,y)\leq \varepsilon$. \\

{\bf 2.}  On any weighted graph with vertex degrees uniformly bounded above by $D$, the metric $\frac{1}{\sqrt{D}}d_E$ is adapted to the operator $\mathcal{L}_{\mathds{1}}$, where for $x,y\in G$,

\begin{equation*}
d_E(x,y) := \inf\left\{\sum_{e\in\gamma} 1\wedge \pi(e)^{-1/2}:\text{$\gamma$ is a path joining $x$ and $y$}\right\}.
\end{equation*}

If the graph is $k-$regular, then at each $x\in G$ there is equality in \eqref{adaptedness}. \\

{\bf 3.}  On any weighted graph, the metric $d_V$ is adapted to the operator $\mathcal{L}_\theta$, where for $x,y\in G$, 

\begin{equation*}
d_V(x,y) := \inf\left\{\sum_{e\in\gamma} 1\wedge c(e):\text{$\gamma$ is a path joining $x$ and $y$}\right\},
\end{equation*}
\\
and for $e := \{u,v\}$,

\begin{equation*}
c(e) := \left(\frac{\theta_u}{\pi_u}\wedge\frac{\theta_v}{\pi_v}\right)^{1/2}.
\end{equation*}
\\
The condition of adaptedness has appeared in several recent papers; see \cite{F1} and \cite{F2} by the author, where adapted metrics are used to obtain heat kernel estimates and criteria for stochastic completeness, \cite{GHM}, where adapted metrics are used to obtain criteria for stochastic completeness, and \cite{H}, which deals with uniqueness of solutions to the heat equation.  This condition was communicated to the author by A. Grigor'yan, and it has some similarities with certain distance functions on graphs considered by Davies in \cite{D1}.  The condition of adaptedness, as well as its relation to certain metrics on graphs considered by other authors, is discussed in greater detail in Section 2 of \cite{F2}. \\

Let $\rho$ be a metric adapted to the operator $\mathcal{L}_\theta$.  Write

\begin{equation*}
B_\rho(x_0,r) := \{x\in G:\rho(x_0,x)\leq r\}
\end{equation*}
\\
for the closed ball of radius $r$ in this metric, and

\begin{equation*}
V_\rho(x_0,r) := \sum_{x\in B_\rho(x_0,r)} \theta_x
\end{equation*}
\\
for the volume of this ball with respect to the measure $(\theta_x)_{x\in G}$. \\

We define the exponential volume growth of $(\Gamma,\pi)$ with respect to the measure $\theta$ and the adapted metric $\rho$ by

\begin{equation}\label{expVG}
VG(\Gamma,\pi,\theta,\rho) := \limsup_{r\to\infty} \frac{1}{r}\log V_\rho(x_0,r).
\end{equation}
\\
An application of the triangle inequality shows that the right hand side of \eqref{expVG} is independent of the choice of $x_0\in G$. \\ 

If $VG(\Gamma,\pi,\theta,\rho)=0$ (respectively $VG(\Gamma,\pi,\theta,\rho)\in (0,\infty)$, $VG(\Gamma,\pi,\theta,\rho)=\infty$), we say that $\Gamma$ has subexponential (respectively exponential, superexponential) volume growth with respect to the measure $\theta$ and the metric $\rho$. \\

\subsection{Spectra}

Let $\sigma(\Gamma,\pi,\theta)$ denote the spectrum of $-\mathcal{L}_\theta$ on $(\Gamma,\pi)$.  As a positive semidefinite self-adjoint operator, the bottom of the spectrum of $-\mathcal{L}_\theta$ is nonnegative, and we denote it by $\lambda_0(\Gamma,\pi,\theta)$. \\

We have the following variational expression for the bottom of the spectrum:

\begin{equation}\label{variational}
\lambda_0(\Gamma,\theta) := \inf\{\mathcal{E}(f,f):f\in \mathcal{D}(\mathcal{E}), \ \|f\|^2_{L^2(\theta)} = 1\}.
\end{equation}
\\
For $V\subset G$, we define the Dirichlet Laplacian on $V$ on $\mathcal{D}(\mathcal{L}_\theta)\cap \{f\in L^2(\theta):f|_{G\setminus V} = 0\}$ by \\

\begin{equation*}
\mathcal{L}^V_\theta f(x) := \begin{cases} \mathcal{L}_\theta f(x) &\text{if }x\in V, \\ 0 &\text{if }x\in G\setminus V. \end{cases}
\end{equation*}
\\
Let $\lambda^V_0(\Gamma,\pi,\theta)$ denote the bottom of the spectrum of the Dirichlet Laplacian on $V$.  If $(G_k)_{k\in\mathbb{Z}_+}$ are finite sets which increase to $G$, then we have that as $k\to\infty$,

\begin{equation*}
\lambda^{G_k}_0(\Gamma,\pi,\theta) \downarrow \lambda_0(\Gamma,\pi,\theta).
\end{equation*}
\\
We use the following identity together with \eqref{variational} to obtain that for $f\in L^2(\theta)$,

\begin{equation*}
\frac{d}{dt}\|P^\theta_t f\|^2_{L^2(\theta)} = -2\langle \mathcal{L}_\theta P^\theta_t f,P^\theta_t f\rangle_{L^2(\theta)} \leq -2\lambda_0(\Gamma,\pi,\theta)\| P^\theta_t f\|^2,
\end{equation*}
\\
and this differential inequality together with $P^\theta_0=I$ gives $\|P^\theta_t\| \leq \exp(-\lambda_0(\Gamma,\pi,\theta)t)$.  On the other hand, since $\mathcal{L}^V_\theta$ and $P^\theta_t$ commute, if $(G_k)_{k\in\mathbb{Z}_+}$ are finite sets which increase to $G$
and $\lambda_k := \lambda^{G_k}_0(\Gamma,\theta,\pi)$ has corresponding eigenfunction $f_k$, it follows that for all $k\in\mathbb{Z}_+$,

\begin{equation*}
\frac{d}{dt}\|P^\theta_t f_k\|^2_{L^2(\theta)} = -2\langle \mathcal{L}^{G_k}_\theta P^\theta_t f_k,P^\theta_t f_k\rangle_{L^2(\theta)} = -2\lambda_k \|P^\theta_t f_k\|^2_{L^2(\theta)}.
\end{equation*}
\\
Solving the differential equation gives $\|P^\theta_t\| \geq \exp(-\lambda_k t)$, from which we conclude that $\|P^\theta_t\| = \exp(-\lambda_0(\Gamma,\pi,\theta)t)$. \\

In \cite{KLVW} a closely related bound for the heat kernel is proved; for all $x,y\in G$,

\begin{equation*}
\lim_{t\to\infty}\frac{1}{t}\log p^\theta_t(x,y) \to -\lambda_0(\Gamma,\pi,\theta).
\end{equation*}
\\
Our results concern the bottom of the essential spectrum of $-\mathcal{L}_\theta$.  The essential spectrum $\sigma_{ess}(\Gamma,\pi,\theta)$ consists of points of $\sigma(\Gamma,\pi,\theta)$ which are either accumulation points of $\sigma(\Gamma,\pi,\theta)$ or which are discrete eigenvalues of $-\mathcal{L}_\theta$ with infinite multiplicity.  The discrete spectrum of $-\mathcal{L}_\theta$ is defined by $\sigma_{disc}(\Gamma,\pi,\theta) = \sigma(\Gamma,\pi,\theta)\setminus \sigma_{ess}(\Gamma,\pi,\theta)$. \\ 

We denote the bottom of the essential spectrum by $\lambda^{ess}_0(\Gamma,\pi,\theta)$.  Clearly, $\lambda_0(\Gamma,\pi,\theta) \leq \lambda^{ess}_0(\Gamma,\pi,\theta)$.  The essential spectrum is invariant under finite perturbations (see Lemma 1 of \cite{Fu2}), and by standard techniques of spectral theory, if $(G_k)_{k\in\mathbb{Z}_+}$ is any sequence of finite subsets of $G$ which increase to $G$, then we have that

\begin{equation*}
\lambda^{ess}_0(\Gamma,\pi,\theta) = \lim_{k\to\infty} \lambda^{G\setminus G_k}_0(\Gamma,\pi,\theta).
\end{equation*}
\\
Our main results are estimates relating the volume growth in adapted metrics to the bottom of the essential spectrum.  In the setting of Riemannian manifolds, Brooks proved the following result relating volume growth in the Riemannian metric with the bottom of the spectrum:

\begin{thm}
(Brooks, \cite{Br1}) Let $M$ be a smooth, complete, non-compact Riemannian manifold with Riemannian metric $\rho_M$ and Riemannian volume $V_M$.  Let $\lambda^{ess}_0(M)$ denote the bottom of the essential spectrum of the Laplace-Beltrami operator $-\Delta_M$ on $L^2(M)$.  Define, for some $x_0\in M$,

\begin{equation*}
\mu_M := \limsup_{r\to\infty} \frac{1}{r}\log V_M(B_{\rho_M}(x_0,r)).
\end{equation*}
\\
If $M$ has infinite volume, then

\begin{equation*}
\lambda_0^{ess}(M) \leq \frac{1}{4}\mu^2_M.
\end{equation*}
\end{thm}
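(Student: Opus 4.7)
The plan is to use Persson's characterization of the essential spectrum together with a radial test function modelled on the first Dirichlet eigenfunction of the radial Laplacian on a space of pure exponential volume growth.

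First I would invoke Persson's principle: on a complete non-compact manifold,
\[
\lambda^{ess}_0(M) = \sup_{K \subset M \text{ compact}} \lambda_0(M \setminus K),
\]
with $\lambda_0(M\setminus K) := \inf\{\|\nabla f\|^2_{L^2(M)} / \|f\|^2_{L^2(M)} : f \in C_c^\infty(M\setminus K),\ f \not\equiv 0\}$. Thus, given $\varepsilon > 0$ and any $R_0 > 0$, it suffices to exhibit a nonzero Lipschitz function $f$ supported in $M \setminus \overline{B(x_0, R_0)}$ whose Rayleigh quotient is at most $\mu_M^2/4 + \varepsilon$.

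The right test function is suggested by the model space: on the weighted half-line with measure $e^{\mu_M r}\,dr$, the first Dirichlet eigenvalue on $[R_0, R_0 + L]$ equals $\mu_M^2/4 + \pi^2/L^2$, realized by $\psi(r) := e^{-\mu_M r/2}\sin(\pi(r - R_0)/L)$. Writing $r(x) := \rho_M(x_0, x)$, I would take $f(x) := \psi(r(x))$ on the annulus $\{R_0 \leq r(x) \leq R_0 + L\}$ and zero elsewhere. Since $|\nabla r|\leq 1$ almost everywhere, $|\nabla f|^2 \leq (\psi'(r(x)))^2$ a.e., so $\int_M|\nabla f|^2\,dV_M \leq \int_0^\infty (\psi'(r))^2\,dV(r)$ where $V(r) := V_M(B(x_0, r))$ is viewed as a Stieltjes function. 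Expanding the square and noting that in the model case $dV(r) = \mu_M e^{\mu_M r}\,dr$, two of the three resulting integrals produce $(\mu_M^2/4)\|f\|^2_{L^2(M)}$ and $(\pi^2/L^2)\|f\|^2_{L^2(M)}$ respectively, while the cross-term $(\mu_M\pi/L)\int e^{-\mu_M r}\sin\cos\,dV(r)$ vanishes by antisymmetry; this gives the clean Rayleigh bound $\mu_M^2/4 + \pi^2/L^2$.

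The main obstacle is that $\mu_M$ is only the \emph{limsup} of $(\log V_M(B(x_0,r)))/r$, so $V(r)$ need not agree with $e^{\mu_M r}$: both the denominator $\int_M f^2\,dV_M$ and the cross-term require care. I would choose $R_0 + L$ along a subsequence $r_n \to \infty$ realizing the limsup, so that $V_M(B(x_0, r_n)) \geq e^{(\mu_M - \delta)r_n}$ for preassigned $\delta > 0$; this bounds the denominator below on the correct scale. The cross-term and lower-order errors are controlled by the complementary bound $V_M(B(x_0,r)) \leq e^{(\mu_M + \delta)r}$ valid for all sufficiently large $r$; an integration-by-parts argument shows these errors are $O(\delta)$ relative to the main term. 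The infinite-volume hypothesis enters to ensure the annulus carries positive $V_M$-mass so $f \not\equiv 0$. Letting $L \to \infty$ along the good subsequence, then $\delta \to 0$ and $\varepsilon \to 0$, yields $\lambda^{ess}_0(M) \leq \mu_M^2/4$.
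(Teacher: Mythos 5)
The paper does not prove this statement at all --- it is quoted as background (Brooks' theorem, reference \cite{Br1}); what the paper does contain, in Section 2, is a graph adaptation of Brooks' method, namely the exponential ``tent'' functions $f_j = \exp(h_j)$ with $|h_j(x)-h_j(y)|\leq\alpha\rho(x,y)$. Your route (Persson's principle plus a sine-modulated exponential quasimode on an annulus) is a genuinely different test-function choice, and it is exactly at that choice that your argument has a gap.

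The gap is the claim that choosing the outer radius $R_0+L=r_n$ with $V_M(B(x_0,r_n))\geq e^{(\mu_M-\delta)r_n}$ ``bounds the denominator below on the correct scale.'' It does not. Since $\mu_M$ is only a $\limsup$, the hypothesis gives no information about \emph{where inside} $B(x_0,r_n)$ that volume sits; it may be concentrated in a thin shell at distance $O(1)$ inside $r_n$, i.e.\ essentially at a node of your profile $\psi(r)=e^{-\mu_M r/2}\sin\bigl(\pi(r-R_0)/L\bigr)$, where $\psi$ vanishes but $\psi'$ does not. Concretely, if essentially all of the mass of the annulus sits at $r^*=R_0+L-1$, then the Rayleigh quotient of $f=\psi\circ r$ is
\begin{equation*}
\left(\frac{\psi'(r^*)}{\psi(r^*)}\right)^{2}=\left(\frac{\mu_M}{2}+\frac{\pi}{L}\cot\frac{\pi}{L}\right)^{2}\;\longrightarrow\;\left(\frac{\mu_M}{2}+1\right)^{2}=\frac{\mu_M^2}{4}+\mu_M+1
\end{equation*}
as $L\to\infty$, so the error does not vanish no matter how large you take $L$; your cross-term/``$O(\delta)$'' accounting never addresses this $\cos^2$-versus-$\sin^2$ imbalance, which is the real enemy, and such volume distributions are perfectly consistent with the $\limsup$ hypothesis (e.g.\ manifolds with large bulbs at sparse radii). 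Recentering the annulus on the concentration set would require knowing where the mass is; with only $\limsup$ information you would need an additional averaging/pigeonhole argument over annulus positions, which the proposal does not supply. This is precisely why Brooks --- and this paper in its discrete setting --- use instead $f_j=e^{h_j}$ with the tent profile $h_j=\alpha\rho_{x_0}$ for $\rho_{x_0}\leq j$ and $h_j=2\alpha j-\alpha\rho_{x_0}$ beyond, for $\alpha>\mu/2$: such $f_j$ satisfy the \emph{pointwise} bound $|\nabla f_j|\leq\alpha f_j$ (discretely, the inequality of Corollary~\ref{corThm2}), so the energy bound $\mathcal{E}(f_j,f_j)\leq\alpha^2\|f_j\|^2$ holds for an arbitrary volume distribution; the volume growth hypothesis is used only to guarantee $f_j\in L^2$ (Lemma 2.1 of the paper, needing $\mu<2\alpha$), and infinite volume only to force $\|g_j\|_{L^2}\to\infty$ so that the correction supported near the compact set $K$ washes out (Lemma~\ref{lem2Ng}). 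If you want to keep your quasimode, you must either prove a lower bound on the $\nu$-mass ($\nu=e^{-\mu_M r}\,dV$) of the \emph{middle} of some suitably chosen annulus, or abandon the sine profile for one with bounded logarithmic derivative.
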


A similar estimate was subsequently established in the more general setting of strongly local Dirichlet spaces in \cite{St}. \\

In the setting of unweighted graphs (where the associated Dirichlet form is nonlocal), Fujiwara proved an analogue of these results for the spectrum of the operator $-\mathcal{L}_\pi$:

\begin{thm}
(Fujiwara, \cite{Fu1})  If $\Gamma$ is an unoriented, connected, countably infinite, locally finite graph with the standard weights, and $\mu_\Gamma := VG(\Gamma,\pi,d)$, then

\begin{equation*}
\lambda^{ess}_0(\Gamma,\pi) \leq 1+\frac{2\exp(\mu_\Gamma/2)}{1+\exp(\mu_\Gamma)}.
\end{equation*}
\end{thm}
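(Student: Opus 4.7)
The plan is to combine the spectral characterization $\lambda_0^{ess}(\Gamma,\pi) = \lim_{k\to\infty}\lambda_0^{G\setminus B(x_0,k)}(\Gamma,\pi)$ with the variational principle \eqref{variational}, applied to explicit exponentially decaying test functions of the form $f(x) = c^{d(x_0,x)}$ with $c \in (0,1)$. The heart of the argument is a direct evaluation of the Rayleigh quotient $\mathcal{E}(f,f)/\|f\|_{L^2(\pi)}^2$ for such $f$, obtained by sorting edges according to how they cross the distance spheres $S_r := \{x : d(x_0,x) = r\}$.

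Let $E_{r,s}$ denote the number of edges joining $S_r$ and $S_s$. Because standard weights give $\pi_x = \deg(x)$, summing the degree identity over $S_r$ yields
\begin{equation*}
\pi(S_r) = 2E_{r,r} + E_{r-1,r} + E_{r,r+1}.
\end{equation*}
Introduce $A := \sum_r E_{r,r} c^{2r}$ and $B := \sum_r E_{r,r+1} c^{2r}$. Direct bookkeeping, combined with the identification $P = I + \mathcal{L}_\pi$ of the natural Markov transition operator, gives
\begin{equation*}
\|f\|_{L^2(\pi)}^2 = 2A + (1+c^2)B, \qquad \langle Pf,f\rangle_{L^2(\pi)} = 2A + 2cB,
\end{equation*}
so $\mathcal{E}(f,f) = \|f\|_{L^2(\pi)}^2 - \langle Pf,f\rangle_{L^2(\pi)} = (1-c)^2 B$ and
\begin{equation*}
\frac{\mathcal{E}(f,f)}{\|f\|^2_{L^2(\pi)}} = \frac{(1-c)^2 B}{2A+(1+c^2)B} \leq \frac{(1-c)^2}{1+c^2}.
\end{equation*}
This upper bound is strictly decreasing on $(0,1)$, and evaluating at $c = e^{-\mu_\Gamma/2}$ yields the right-hand side of the theorem.

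To convert this formal calculation (which ignores whether $f\in L^2(\pi)$) into an essential-spectrum bound, one works instead with the compactly supported cut-offs $f_{k,N}(x) := c^{d(x_0,x)}\mathbf{1}_{k < d(x_0,x) \leq N}$, which lie in $C_c(G\setminus B(x_0,k))$. All interior contributions are inherited by restricting the sums above to $r \in (k,N]$; the only new pieces are two ``cliff'' terms $E_{k,k+1}c^{2k}$ and $E_{N,N+1}c^{2N}$ in $\mathcal{E}(f_{k,N},f_{k,N})$, arising from edges on which $f_{k,N}$ jumps from $0$ to a nonzero value. The main obstacle is to show that these boundary contributions are negligible relative to the bulk sum $B$, so that the Rayleigh quotient of $f_{k,N}$ still converges to $(1-c)^2/(1+c^2)$ in a suitable iterated limit. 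The idea is to pick $c$ slightly below $e^{-\mu_\Gamma/2}$ and select $N$ along a subsequence provided by the $\limsup$ in the definition of $\mu_\Gamma$, along which $V_d(x_0,N) \geq e^{(\mu_\Gamma-\eta)N}$ for arbitrarily small $\eta>0$; on this subsequence $B$ grows fast enough to dominate both boundary pieces. Sending $N\to\infty$ along this subsequence, then $c\uparrow e^{-\mu_\Gamma/2}$, and finally $k\to\infty$, produces the claimed bound.
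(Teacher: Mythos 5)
Your bulk computation is correct, and it is a genuinely different route from the paper's. The paper obtains this statement as the special case $\rho=d$, $\theta=\pi$, $m=M=1$ of its Theorem~1.4, proved via tent functions $f_j=\exp(h_j)$ that \emph{increase} like $e^{\alpha d(x_0,\cdot)}$ out to radius $j$ before decaying, cut off on a finite set $K$, followed by a contradiction argument against $\lambda_0^{G\setminus K}$. Your sphere-by-sphere identities $\|f\|^2_{L^2(\pi)}=2A+(1+c^2)B$, $\langle Pf,f\rangle_{L^2(\pi)}=2A+2cB$, hence $\mathcal{E}(f,f)=(1-c)^2B$ and the Rayleigh-quotient bound $(1-c)^2/(1+c^2)$, all check out; note that at $c=e^{-\mu_\Gamma/2}$ this gives $1-\tfrac{2e^{\mu_\Gamma/2}}{1+e^{\mu_\Gamma}}$, i.e.\ you would be proving the correct (stronger) form of Fujiwara's bound --- the ``$+$'' in the paper's statement of the theorem is evidently a typo for ``$-$''.

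The gap is in the boundary terms, and as written it is fatal. You take $c$ \emph{slightly below} $e^{-\mu_\Gamma/2}$; but then $\sum_r c^{2r}\pi(S_r)$ converges (for small $\eta$ one has $\pi(S_r)\leq V_d(x_0,r)\leq e^{(\mu_\Gamma+\eta)r}$ eventually and $c^2e^{\mu_\Gamma+\eta}<1$), so $B$ and $\|f_{k,N}\|^2$ stay \emph{bounded} as $N\to\infty$: nothing ``grows fast enough to dominate the boundary pieces.'' The limsup subsequence with $V_d(x_0,N)\geq e^{(\mu_\Gamma-\eta)N}$ does not help, since for such $c$ one gets $c^{2N}V_d(x_0,N)\leq e^{-(\delta+\eta)N}\to 0$. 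Consequently the inner cliff $E_{k,k+1}c^{2k+2}$ is a fixed positive constant for fixed $k$, while the denominator is bounded, so it contributes a non-vanishing additive error to the Rayleigh quotient. Contrast this with the paper, where the cliff at $K$ is likewise a constant but the norms $\|g_j\|^2\geq V_\rho(x_0,j)-V_\rho(x_0,M_K)$ blow up --- precisely because the tent functions are $\geq 1$ on $B_\rho(x_0,j)$; your monotone decreasing test functions lack exactly this mechanism. A repair needs a real additional idea: either take $c$ slightly \emph{above} $e^{-\mu_\Gamma/2}$, where the subsequence genuinely forces $\|f_{k,N}\|^2\geq c^{2N}\bigl(V_d(x_0,N)-V_d(x_0,k)\bigr)\to\infty$ (this kills the inner cliff, but then the outer cliff $E_{N,N+1}c^{2N}$ can be comparable to the entire norm, e.g.\ when most of $V_d(x_0,N)$ sits in the last sphere, so you must additionally select $N$ so that $c^{2N}\pi(S_N)$ is a small fraction of $\sum_{r\leq N}c^{2r}\pi(S_r)$); or drop the outer truncation entirely (legitimate since $\mathcal{L}_\pi$ is bounded, so $\mathcal{D}(\mathcal{E})=L^2(\pi)$) and prove that the inner-cliff-to-norm ratio vanishes along some sequence of $k$ or as $c\uparrow e^{-\mu_\Gamma/2}$, using the observation that if $e^{-\mu_\Gamma(k+1)}\pi(S_{k+1})\geq\varepsilon\sum_{r>k}e^{-\mu_\Gamma r}\pi(S_r)$ for all large $k$, then $V_d(x_0,r)$ grows at exponential rate at most $\mu_\Gamma+\log(1-\varepsilon)<\mu_\Gamma$, contradicting the definition of $\mu_\Gamma$. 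Neither of these steps appears in your proposal.
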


While the Laplace-Beltrami operator $\Delta_M$ considered by Brooks may be unbounded, the operator $\mathcal{L}_\pi$ considered by Fujiwara is bounded.  In this setting, there is no need to consider metrics besides the graph metric, and the boundedness of $\mathcal{L}_\pi$ rules out various interesting behaviors, such as absence of essential spectrum, or the possibility of stochastic incompleteness. \\

We have two main results for the bottom of the essential spectrum.  For simplicity, we fix an adapted metric $\rho$ and set $\mu := VG(\Gamma,\pi,\theta,\rho)$. \\

The first result has no restrictions on the adapted metric $\rho$ and is useful even when $\mathcal{L}_\theta$ is unbounded.  

\begin{thm}\label{thm2}
The following general estimate for the bottom of the spectrum holds:
\begin{equation*}
\lambda^{ess}_0(\Gamma,\pi,\theta) \leq \frac{1}{8}\mu^2.
\end{equation*}
\end{thm}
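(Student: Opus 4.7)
Assume $\mu<\infty$, else the bound is vacuous, and fix $\alpha>\mu/2$. The plan is Brooks' test-function argument adapted to this discrete nonlocal setting; the single exponential
\[
f_\alpha(x) := \exp\bigl(-\alpha\rho(x_0,x)\bigr)
\]
will be shown to have Rayleigh quotient at most $\alpha^2/2$, and the improvement of Brooks' constant from $1/4$ to $1/8$ comes from a sharp elementary comparison between $(f(x)-f(y))^2$ and $f(x)^2+f(y)^2$. First I verify $f_\alpha \in L^2(\theta)$: by definition of $\mu=VG(\Gamma,\pi,\theta,\rho)$, for any $\epsilon \in (0,2\alpha-\mu)$ there is $C_\epsilon$ with $V_\rho(x_0,r) \leq C_\epsilon e^{(\mu+\epsilon)r}$ for large $r$, and decomposing the sum into level sets of $\rho(x_0,\cdot)$ gives $\|f_\alpha\|^2_{L^2(\theta)}=\sum_{x}e^{-2\alpha\rho(x_0,x)}\theta_x<\infty$.

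The heart of the argument is the pointwise estimate
\[
(f_\alpha(x)-f_\alpha(y))^2 \leq \frac{\alpha^2\rho(x,y)^2}{2}\bigl(f_\alpha(x)^2+f_\alpha(y)^2\bigr).
\]
Writing $t := \rho(x_0,y)-\rho(x_0,x)$, direct computation gives $2f_\alpha(x)f_\alpha(y)/(f_\alpha(x)^2+f_\alpha(y)^2) = \operatorname{sech}(\alpha t)$, hence
\[
(f_\alpha(x)-f_\alpha(y))^2 = \bigl(f_\alpha(x)^2+f_\alpha(y)^2\bigr)\bigl(1-\operatorname{sech}(\alpha t)\bigr).
\]
The elementary inequality $1-\operatorname{sech}(s) \leq s^2/2$ valid for all $s\in\mathbb{R}$ (equivalent to $\cosh(s)(1-s^2/2)\leq 1$, which is trivial for $|s|\geq\sqrt{2}$ and for $|s|<\sqrt{2}$ follows from termwise comparison of the Taylor series for $\cosh(s)$ and $1/(1-s^2/2)$), together with $|t| \leq \rho(x,y)$ from the triangle inequality for $\rho$, yields the displayed bound. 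Summing over unordered edges and applying the adaptedness condition \eqref{adaptedness},
\[
\mathcal{E}(f_\alpha,f_\alpha) \leq \frac{\alpha^2}{2}\sum_{x\in G}f_\alpha(x)^2\sum_{y\sim x}\pi_{xy}\rho(x,y)^2 \leq \frac{\alpha^2}{2}\|f_\alpha\|^2_{L^2(\theta)},
\]
so \eqref{variational} at once gives $\lambda_0(\Gamma,\pi,\theta) \leq \alpha^2/2$.

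To upgrade to the essential-spectrum bound, I would invoke the characterization $\lambda_0^{\mathrm{ess}}(\Gamma,\pi,\theta) = \lim_{k\to\infty}\lambda_0^{G\setminus G_k}(\Gamma,\pi,\theta)$ from the introduction. For each $k$ I would apply the estimate above to the test function $f_{\alpha,k}(x) := e^{-\alpha\rho(x_k,x)}\chi_k(x)$, where $x_k \in G\setminus G_k$ is chosen with $\rho(x_k,G_k)$ large and $\chi_k$ is a cutoff vanishing on $G_k$. The bulk term continues to be at most $(\alpha^2/2)\|f_{\alpha,k}\|^2$ because adaptedness and the value of $\mu$ do not depend on the basepoint, while the extra contribution coming from $\chi_k$ involves $f_{\alpha,k}$ only near $G_k$, where it is exponentially small of order $e^{-\alpha\rho(x_k,G_k)}$; thus the ratio of this extra term to $\|f_{\alpha,k}\|^2$ tends to $0$ as $\rho(x_k,G_k) \to \infty$. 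Sending $k\to\infty$ and then $\alpha\downarrow\mu/2$ yields $\lambda_0^{\mathrm{ess}}(\Gamma,\pi,\theta) \leq \mu^2/8$.

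The main obstacle is the control of the cutoff term in this last step when $\mathcal{L}_\theta$ is unbounded: since $\pi_{xy}$ may be arbitrarily large at adjacent pairs with small $\rho(x,y)$, one cannot bound the boundary sum by anything involving $\pi(G_k)$ directly. The fix is to take $\chi_k$ Lipschitz in $\rho(\cdot,G_k)$ rather than a hard indicator, so that the extra energy it contributes has the shape $\sum_x f_{\alpha,k}(x)^2\sum_{y\sim x}\pi_{xy}\rho(x,y)^2$ and is again absorbed by \eqref{adaptedness}, with the remaining slack killed by the exponential smallness of $f_{\alpha,k}$ near $G_k$. Apart from this boundary bookkeeping, the remaining ingredients---$L^2$-integrability, the sech identity, and the power-series inequality---are routine.
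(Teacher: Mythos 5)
Your first step --- the bound $\lambda_0(\Gamma,\pi,\theta)\le \alpha^2/2$ for any $2\alpha>\mu$ --- is essentially the paper's own computation: your identity $\frac{(1-e^{t})^2}{1+e^{2t}}=1-\operatorname{sech}(t)$ together with $1-\operatorname{sech}(s)\le s^2/2$ is exactly Corollary~\ref{corThm2}, and the summation using \eqref{adaptedness} is Lemma~\ref{DFcontrol}. (One omission even here: $\mathcal{D}(\mathcal{E})$ is the $H^1$-closure of $C_c(G)$, so you must check that your test functions actually lie in the form domain, not merely that they have finite norm and finite energy; the paper does this by approximating with $(g_j-\varepsilon)_+$, using finiteness of balls.)

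The essential-spectrum upgrade, however, contains a genuine gap, and it sits exactly where you declare things routine. The unjustified claim is that the cutoff contribution divided by $\|f_{\alpha,k}\|^2$ tends to $0$ as $\rho(x_k,G_k)\to\infty$. The cutoff term is indeed of order $C_k\,e^{-2\alpha\rho(x_k,G_k)}$, but nothing you say prevents $\|f_{\alpha,k}\|^2$ from being of the \emph{same} order: the only a priori lower bound on the norm comes from $\theta$-mass at bounded distance from the cutoff region, which is damped by the very same factor $e^{-2\alpha\rho(x_k,G_k)}$, while the mass near the center $x_k$ --- which would carry weight $O(1)$ --- can be exponentially small, since the hypotheses place no lower bound on $\theta(B_\rho(x_k,R))$ for the particular points you pick. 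In that situation numerator and denominator have the same exponential order and the ratio is merely bounded, not vanishing. To repair the argument you must \emph{prove} that good centers exist, and this needs \eqref{thetalb} in an essential way; for instance: if one had $\sum_y e^{-2\alpha\rho(x,y)}\theta_y \le C e^{-2\alpha\rho(x,x_0)}$ for all $x$, the diagonal term $y=x$ would force $\theta_x\le Ce^{-2\alpha\rho(x_0,x)}$ for every $x$, hence $\theta(G)\le C\sum_x e^{-2\alpha\rho(x_0,x)}<\infty$, contradicting \eqref{thetalb}; therefore $\sup_x e^{2\alpha\rho(x,x_0)}\sum_y e^{-2\alpha\rho(x,y)}\theta_y=\infty$ and a suitable escaping sequence of centers exists. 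Some argument of this kind is indispensable, and your proposal has none of it --- you instead identify the unboundedness of $\pi$ as the main obstacle, which your Lipschitz cutoff handles correctly but which is the lesser issue. Note that the paper sidesteps this difficulty entirely by using the opposite kind of test function: \emph{growing} tent functions $f_j=\exp(h_j)$ anchored at a fixed root and cut off on the fixed finite set $K$, for which the cutoff error is a constant $c(\rho,\alpha,K)$ independent of $j$ while $\|g_j\|^2_{L^2(\theta)}\to\infty$ by \eqref{thetalb} (Lemmas~\ref{lem2Ng} and~\ref{lemDFg}), so dividing kills the error. In your scheme the norms stay bounded (typically they tend to $0$), which is precisely why the vanishing of your ratio cannot be waved through.
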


The exponent $2$ cannot be reduced; see Example 1 in Section 4. \\

The second result generalizes results of Fujiwara in \cite{F1} and is useful when $\mathcal{L}_\theta$ is bounded.

\begin{thm}\label{thm1}
Suppose that there exist positive constants $m,M$ such that $m\leq \rho(x,y)\leq M$ whenever $x\sim y$.  Then

\begin{equation*}
\lambda^{ess}_0(\Gamma,\pi,\theta) \leq \frac{1}{m^2}\frac{(1-\exp(\frac{1}{2}M\mu))^2}{1+\exp(M\mu)}.
\end{equation*}
\end{thm}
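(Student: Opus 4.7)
The plan is to invoke the identity $\lambda^{ess}_0(\Gamma, \pi, \theta) = \lim_{k \to \infty} \lambda^{G \setminus G_k}_0(\Gamma, \pi, \theta)$ recorded in the preceding subsection. This reduces matters to producing, for each finite $G_k$, a test function $f \in \mathcal{D}(\mathcal{E})$ supported in $G \setminus G_k$ whose Rayleigh quotient $\mathcal{E}(f, f)/\|f\|_{L^2(\theta)}^2$ lies below the claimed bound, modulo errors vanishing as $k \to \infty$. Motivated by the shape of the bound and by Brooks's argument in the Riemannian setting, the natural candidate is the exponentially decaying function $\phi_\alpha(x) := \exp(-\alpha \rho(x_0, x))$ with $\alpha$ taken close to $\mu/2$, truncated to an annulus disjoint from $G_k$.

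The core of the calculation is an edge-by-edge estimate for $\phi_\alpha$. For adjacent $x, y$, set $t := |\rho(x_0, x) - \rho(x_0, y)|$, so the triangle inequality and the hypothesis give $t \leq \rho(x, y) \leq M$. A direct factorization shows
\begin{equation*}
\frac{(\phi_\alpha(x) - \phi_\alpha(y))^2}{\phi_\alpha(x)^2 + \phi_\alpha(y)^2} = \frac{(1 - e^{-\alpha t})^2}{1 + e^{-2\alpha t}},
\end{equation*}
and the right-hand side is increasing in $t$ (its derivative in $u = e^{-\alpha t}$ is negative on $(0,1)$), hence is maximized at $t = M$. Thus
\begin{equation*}
(\phi_\alpha(x) - \phi_\alpha(y))^2 \leq \frac{(e^{\alpha M} - 1)^2}{1 + e^{2\alpha M}}\,(\phi_\alpha(x)^2 + \phi_\alpha(y)^2).
\end{equation*}
Summing over ordered edges and using the symmetry of $\pi_{xy}$ yields $\mathcal{E}(\phi_\alpha, \phi_\alpha) \leq \frac{(e^{\alpha M} - 1)^2}{1 + e^{2\alpha M}} \sum_x \phi_\alpha(x)^2 \pi_x$. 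The lower edge-length bound $\rho(x, y) \geq m$ combined with \eqref{adaptedness} forces
\begin{equation*}
m^2 \pi_x = m^2 \sum_{y \sim x} \pi_{xy} \leq \sum_{y \sim x} \pi_{xy} \rho^2(x, y) \leq \theta_x,
\end{equation*}
so $\sum_x \phi_\alpha(x)^2 \pi_x \leq m^{-2} \|\phi_\alpha\|_{L^2(\theta)}^2$. Substituting $\alpha = \mu/2$ reproduces exactly the constant in the statement.

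To convert this heuristic bound into a rigorous Rayleigh-quotient estimate on $\lambda_0^{G \setminus G_k}$, one truncates $\phi_\alpha$ by multiplying it with an annular $\rho$-Lipschitz cutoff $\eta_{r, R}$ supported on $B_\rho(x_0, R) \setminus B_\rho(x_0, r)$, with $r$ large enough that $B_\rho(x_0, r) \supset G_k$. Choosing $\alpha$ slightly larger than $\mu/2$ ensures $\phi_\alpha \in L^2(\theta)$; a standard Leibniz-type decomposition splits $\mathcal{E}(\eta_{r,R}\phi_\alpha, \eta_{r,R}\phi_\alpha)$ into a bulk term dominated by the inequality above and cutoff errors localized near the radii $r$ and $R$. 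One then chooses $R = R_n$ along a subsequence realizing the $\limsup$ in the definition of $\mu$, so that $\|\eta_{r, R_n} \phi_\alpha\|_{L^2(\theta)}^2$ grows rapidly enough to absorb the cutoff errors, and finally lets $\alpha \downarrow \mu/2$. The principal obstacle is this cutoff management: the inner boundary error depends on a fixed radius and is harmless, but the outer error must be shown to be of lower order than the bulk $L^2$-mass, which is precisely the point at which one must exploit the $\limsup$ nature of $\mu$ rather than working along arbitrary radii.
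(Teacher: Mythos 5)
The first half of your argument is correct and coincides with the paper's: the edge-wise identity and monotonicity of $t\mapsto(1-e^{-\alpha t})^2/(1+e^{-2\alpha t})$, followed by the conversion $m^2\pi_x\le\sum_{y\sim x}\pi_{xy}\rho^2(x,y)\le\theta_x$ from adaptedness, is exactly the content of Corollary~\ref{corThm1} and Lemma~\ref{DFcontrol}. The gap is in the truncation step, and as described it is fatal. Since you need $\alpha>\mu/2$ to have $\phi_\alpha\in L^2(\theta)$, the norms $\|\eta_{r,R}\phi_\alpha\|^2_{L^2(\theta)}$ are bounded by $\|\phi_\alpha\|^2_{L^2(\theta)}<\infty$ uniformly in $R$: they cannot ``grow rapidly enough to absorb the cutoff errors,'' and choosing $R=R_n$ along a subsequence realizing the $\limsup$ accomplishes nothing, because the weight $e^{-2\alpha\rho_{x_0}}$ annihilates precisely those large spheres (the sphere of radius $R_n$ contributes at most $e^{-2\alpha R_n}V_\rho(x_0,R_n)\le e^{-(2\alpha-\mu-o(1))R_n}\to 0$). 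You also have the two boundary errors backwards. The outer error is the harmless one; indeed no outer cutoff is needed, since $\phi_\alpha\in\mathcal{D}(\mathcal{E})$ by the truncation argument $(\phi_\alpha-\varepsilon)_+\uparrow\phi_\alpha$ used at the end of Lemma~\ref{lemDFg}. The dangerous error is the inner one: zeroing $\phi_\alpha$ on a ball containing $G_k$ costs a fixed positive amount in the Dirichlet form, while the surviving tail mass $\sum_{\rho_{x_0}(x)>r}e^{-2\alpha\rho_{x_0}(x)}\theta_x$ is a fixed finite number, and your proposal contains no comparison between the two. Such a comparison is not automatic: when the $\limsup$ defining $\mu$ is attained only along a sparse sequence of radii, the mass of $\phi_\alpha$ sits in the region being cut away and in the transition annulus itself, and the Rayleigh quotient of $\eta_{r,R}\phi_\alpha$ can stay bounded away from the claimed constant for natural choices of $r$ and of the cutoff. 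A repair along your lines requires genuinely new arguments (a case split on whether $\|e^{-\mu\rho_{x_0}/2}\|_{L^2(\theta)}$ is finite; monotone convergence as $\alpha\downarrow\mu/2$ in one case, and in the other a proof that bad cut radii at every scale would force geometric decay of the tails, contradicting the definition of $\mu$), none of which is sketched.

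The paper avoids the difficulty entirely by using a different family of test functions: the tent functions $f_j=\exp(h_j)$, where $h_j=\alpha\rho_{x_0}$ on $B_\rho(x_0,j)$ and $h_j=2\alpha j-\alpha\rho_{x_0}$ outside, with $\alpha>\mu/2$. The condition $\alpha>\mu/2$ is used only for finiteness of $\|f_j\|_{L^2(\theta)}$ (via Lemma 2.1, the finiteness of $\|e^{-\alpha\rho_{x_0}}\|_{L^2(\theta)}$); the crucial lower bound comes for free, since $f_j\ge 1$ on $B_\rho(x_0,j)$ gives $\|f_j\|^2_{L^2(\theta)}\ge V_\rho(x_0,j)\to\infty$ by \eqref{thetalb}, with no appeal to the $\limsup$ and no regularity of the volume growth. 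Zeroing $f_j$ on a fixed finite set $K$ (setting $g_j:=f_j(1-{\bf 1}_K)$) then costs an error $c(\rho,\alpha,K)$ independent of $j$ (Lemma~\ref{lemDFg}), so the Rayleigh quotients of the $g_j$ tend to at most $I_\rho(\alpha)=\frac{1}{m^2}\frac{(1-e^{\alpha M})^2}{1+e^{2\alpha M}}$; comparing with $\lambda_0^{G\setminus K}$ and letting $\alpha\downarrow\mu/2$ by continuity yields the theorem (Theorem~\ref{endthm}). In short, the test functions must carry \emph{unbounded} mass far from $K$, not merely nonzero mass; a decaying exponential cannot do that, while the growing-then-decaying tent does.
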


As discussed previously, such metrics always exist (and can be taken to be a multiple of the graph metric) when $\mathcal{L}_\theta$ is bounded.  On a $k$-regular tree with the standard weights, there is equality for $\mathcal{L}_\pi$ when using the graph metric $d$, or for $\mathcal{L}_{\mathds{1}}$ when using the metric $\frac{1}{\sqrt{k}}d$. \\

The former result has the following two immediate corollaries:

\begin{cor}\label{cor1}
If there exists an adapted metric $\rho$ such that $VG(\Gamma,\pi,\theta,\rho)=0$, then $\lambda^{ess}_0(\Gamma,\pi,\theta)=0$. 
\end{cor}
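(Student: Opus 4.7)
The plan is to apply Theorem \ref{thm2} directly to the adapted metric $\rho$ whose existence is hypothesized. By assumption, $\mu := VG(\Gamma,\pi,\theta,\rho) = 0$, so Theorem \ref{thm2} yields the upper bound
\begin{equation*}
\lambda^{ess}_0(\Gamma,\pi,\theta) \leq \frac{1}{8}\mu^2 = 0.
\end{equation*}

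For the matching lower bound, I would appeal to the earlier observation that $-\mathcal{L}_\theta$ is positive semidefinite: the Gauss--Green identity \eqref{GG} gives $\langle -\mathcal{L}_\theta f,f\rangle_{L^2(\theta)} = \mathcal{E}(f,f)\geq 0$, so $\lambda_0(\Gamma,\pi,\theta) \geq 0$, and hence $\lambda^{ess}_0(\Gamma,\pi,\theta) \geq \lambda_0(\Gamma,\pi,\theta) \geq 0$. Combining the two inequalities gives the claim. There is no real obstacle here — all the nontrivial work has already been absorbed into Theorem \ref{thm2}, and the corollary is essentially a substitution $\mu = 0$ combined with the nonnegativity of the spectrum of $-\mathcal{L}_\theta$.
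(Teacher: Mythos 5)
Your proof is correct and matches the paper's intent exactly: the paper presents this as an immediate corollary of Theorem~\ref{thm2}, obtained by substituting $\mu = 0$, with the reverse inequality $\lambda^{ess}_0(\Gamma,\pi,\theta) \geq \lambda_0(\Gamma,\pi,\theta) \geq 0$ following from the positive semidefiniteness of $-\mathcal{L}_\theta$ noted earlier in the paper. Nothing further is needed.
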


The converse of this statement is false; a counterexample is provided by the operator $\mathcal{L}_\pi$ on the Cayley graph of any solvable group with exponential growth (see \cite{DK}), such as the lamplighter group.

\begin{cor}\label{cor2}
If there exists an adapted metric $\rho$ such that $VG(\Gamma,\pi,\theta,\rho)<\infty$, the essential spectrum is nonempty.  
\end{cor}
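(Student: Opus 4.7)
The plan is to derive Corollary \ref{cor2} as an immediate consequence of Theorem \ref{thm2}. By hypothesis, $\mu := VG(\Gamma,\pi,\theta,\rho)$ is finite for some adapted $\rho$, so Theorem \ref{thm2} yields the finite upper bound
\begin{equation*}
\lambda^{ess}_0(\Gamma,\pi,\theta) \leq \tfrac{1}{8}\mu^2 < \infty.
\end{equation*}
All that remains is to convert this finite upper bound into the nonemptiness of $\sigma_{ess}(\Gamma,\pi,\theta)$.

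For this, I will invoke the convention (implicit throughout the paper's setup of $\lambda^{ess}_0$ as $\inf\sigma_{ess}$) that an empty essential spectrum would correspond to $\lambda^{ess}_0 = +\infty$. Equivalently, I can appeal to the characterization recorded in the introduction, namely $\lambda^{ess}_0(\Gamma,\pi,\theta) = \lim_{k\to\infty}\lambda^{G\setminus G_k}_0(\Gamma,\pi,\theta)$ for any exhaustion $(G_k)$ of $G$ by finite sets: were $\sigma_{ess}$ empty, the operator $-\mathcal{L}_\theta$ would have purely discrete spectrum accumulating only at $+\infty$, forcing the Dirichlet eigenvalues on the complements $G\setminus G_k$ to diverge, which contradicts the finite bound from Theorem \ref{thm2}. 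Either reading gives $\sigma_{ess}(\Gamma,\pi,\theta)\neq \emptyset$.

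There is no real obstacle here — the entire mathematical content sits in Theorem \ref{thm2}, and Corollary \ref{cor2} is a one-line logical unpacking. The only point worth flagging is that one should be explicit about the convention for $\inf\emptyset$, or alternatively route the argument through the Dirichlet-Laplacian-on-complements formula already quoted above, to make the contradiction with $+\infty$ transparent to the reader.
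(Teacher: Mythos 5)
Your proposal is correct and matches the paper's own treatment: the paper presents Corollary~\ref{cor2} as an immediate consequence of Theorem~\ref{thm2}, with the finite bound $\lambda^{ess}_0(\Gamma,\pi,\theta)\leq \frac{1}{8}\mu^2$ ruling out an empty essential spectrum (which, via the exhaustion formula $\lambda^{ess}_0 = \lim_{k}\lambda^{G\setminus G_k}_0$, would force $\lambda^{ess}_0 = +\infty$). Your explicit handling of the $\inf\emptyset = +\infty$ convention is exactly the one-line unpacking the paper leaves implicit.
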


This result cannot be improved upon in the following sense: For every $\varepsilon>0$, there exists a graph $(\Gamma,\pi)$ for which the essential spectrum of $\mathcal{L}_{\mathds{1}}$ is empty, but there exists an adapted metric $\rho$ such that

\begin{equation*}
\mu_{\rho} := \limsup_{r\to\infty} \frac{1}{r^{1+\varepsilon}} \log V_{\rho}(x_0,r) < \infty.
\end{equation*}
\\
See Example 2 in Section 4. \\

The structure of this paper is as follows.  We prove Theorem~\ref{thm1} and Theorem~\ref{thm2} in Section 2; this is accomplished by using techniques of Brooks and Fujiwara together with the adapted metrics defined earlier.  In particular, the Dirichlet forms of certain functions of these metrics can be controlled by the $L^2(\theta)$ norm.  In Section 3, we discuss the phenomena of absence of essential spectrum and its relation to volume growth, and in Section 4, we provide various examples showing the sharpness of our results, as well as settings where the graph metric gives very poor results and use of adapted metrics seems to be essential.

\section{Proofs}

We fix a weighted graph $(\Gamma,\pi)$ and a set of positive  vertex weights $(\theta_x)_{x\in G}$ satisfying \eqref{thetalb}.  We also fix a metric $\rho$ which is adapted to the operator $\mathcal{L}_\theta$, and set $\rho_{x_0}(x) := \rho(x_0,x)$ for $x_0\in G$.  By the triangle inequality, $|\rho_{x_0}(x)-\rho_{x_0}(y)|\leq \rho(x,y)$.  As before, we set $\mu := VG(\Gamma,\pi,\theta,\rho)$. \\

We note that it suffices to assume that $\mu < \infty$.  If $\mu = \infty$, then the conclusion of Theorem~\ref{thm1} is $\lambda^{ess}_0(\Gamma,\pi,\theta)\leq \infty$, which is trivial.  If the hypotheses of Theorem~\ref{thm2} are satisfied (with $m < \rho(x,y)$ whenever $x\sim y$) and $\mu=\infty$, then the conclusion of Theorem~\ref{thm2} is 

\begin{equation*}
\lambda^{ess}_0(\Gamma,\pi,\theta)\leq \frac{1}{m^2}.
\end{equation*}
\\
On the other hand, this estimate may be proven directly by noting that for $x\in G$, $\|\theta_x^{-1/2}{\bf 1}_x\|_{L^2(\theta)} = 1$, and 

\begin{equation*}
\mathcal{E}(\theta_x^{-1}{\bf 1}_x,\theta_x^{-1}{\bf 1}_x) = \frac{\pi_x}{\theta_x} \leq \frac{1}{m^2},
\end{equation*}
\\
where the last inequality follows from adaptedness of $\rho$ and the inequality $m<\rho(x,y)$ for $x\sim y$. \\


In particular, the assumption $\mu<\infty$ implies that for all $x_0\in G$ and $r\geq 0$, the balls $B_\rho(x_0,r)$ have finite volume (i.e., for all $x_0\in G$ and $r\geq 0$, $V_\rho(x_0,r)<\infty)$.  Combining this with \eqref{thetalb}, we see that for $x_0\in G$ and $r\geq 0$, the balls $B_\rho(x_0,r)$ contain only finitely many points, as well. \\
 
The proof follows the general techniques of \cite{Br1} and \cite{F1}.

\begin{lem}
If $\mu < 2\alpha$, then $\langle \exp(-\alpha\rho_{x_0}),\exp(-\alpha\rho_{x_0})\rangle_{L^2(\theta)}<\infty$.
\end{lem}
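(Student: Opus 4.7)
The plan is to estimate $\|\exp(-\alpha\rho_{x_0})\|^2_{L^2(\theta)} = \sum_{x\in G} e^{-2\alpha\rho_{x_0}(x)}\theta_x$ directly, by partitioning $G$ into annuli of unit width in the adapted metric $\rho$ and then comparing the exponential decay of $e^{-2\alpha r}$ against the exponential volume growth bound provided by $\mu$.

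Concretely, I would introduce $A_n := \{x \in G : n \leq \rho_{x_0}(x) < n+1\}$ for $n = 0, 1, 2, \ldots$, which partitions $G$. The excerpt has already observed that the hypothesis $\mu < \infty$ (which I may freely assume, since the conclusion is what we ultimately use) forces each ball $B_\rho(x_0, r)$ to contain only finitely many points, so every $A_n$ is finite and the annular decomposition is legitimate. On $A_n$, the trivial bound $e^{-2\alpha \rho_{x_0}(x)} \leq e^{-2\alpha n}$ gives
$$\sum_{x \in A_n} e^{-2\alpha \rho_{x_0}(x)}\theta_x \;\leq\; e^{-2\alpha n}\, V_\rho(x_0, n+1).$$

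Next, I invoke the definition $\mu = \limsup_{r\to\infty} r^{-1}\log V_\rho(x_0, r)$. Since $\mu < 2\alpha$, I can choose $\varepsilon > 0$ small enough that $\mu + \varepsilon < 2\alpha$, and then there exists $N$ such that $V_\rho(x_0, n+1) \leq \exp((\mu+\varepsilon)(n+1))$ for all $n \geq N$. Summing over $n \geq N$ yields the geometric series
$$\sum_{n \geq N} e^{-2\alpha n} e^{(\mu+\varepsilon)(n+1)} \;=\; e^{\mu+\varepsilon}\sum_{n \geq N} e^{(\mu+\varepsilon - 2\alpha)n},$$
which converges by the choice of $\varepsilon$. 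The finitely many terms indexed by $n < N$ contribute a finite amount since each $A_n$ is finite and $\theta$ is everywhere finite, so the total sum is finite.

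I do not anticipate any substantial obstacle: the only point requiring a bit of care is the conversion from the $\limsup$ definition of $\mu$ to an eventual pointwise bound on $V_\rho(x_0, n+1)$, but this is immediate from the definition of $\limsup$. The argument is essentially a one-line geometric-series estimate once the annular decomposition is in place.
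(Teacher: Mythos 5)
Your proposal is correct and takes essentially the same approach as the paper: decompose $G$ into unit annuli in the metric $\rho$, bound the contribution of each annulus by $e^{-2\alpha n}$ times a ball volume, and conclude by comparison with a geometric series using $\mu < 2\alpha$. The only cosmetic difference is that the paper expresses the annulus volumes as differences $V_\rho(x_0,r+1)-V_\rho(x_0,r)$ and rearranges by summation by parts before the geometric comparison, whereas you bound each annulus volume directly by $V_\rho(x_0,n+1)$; both arguments turn on the same conversion of the $\limsup$ definition of $\mu$ into an eventual bound $V_\rho(x_0,r)\leq e^{(\mu+\varepsilon)r}$ that you make explicit.
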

\begin{proof}
We estimate 

\begin{align*}
\langle \exp(-\alpha\rho_{x_0}),\exp(-\alpha\rho_{x_0})\rangle_{L^2(\theta)} &= \sum_{x\in G}\exp(-2\alpha\rho_{x_0}(x))\theta_x \\
&\leq \theta_{x_0}+\sum^\infty_{r=0}\sum_{\rho_{x_0}(x)\in (r,r+1]} \exp(-2\alpha r)\theta_x \\
&= \theta_{x_0} + \sum^\infty_{r=0} (V_\rho(x_0,r+1)-V_\rho(x_0,r))\exp(-2\alpha r) \\
&=(\exp(2\alpha)-1)\sum^\infty_{r=1} V_\rho(x_0,r)\exp(-2\alpha r).
\end{align*}
\\
The last expression is finite by the definition of $\mu$ and comparison with a geometric series.
\end{proof}

For $j\in\mathbb{Z}_+$, we define a sequence of `tent functions' by

\begin{equation*}
h_j(x) := \begin{cases} \alpha \rho_{x_0}(x) &\text{if }\rho_{x_0}(x) \leq j, \\ 2\alpha j-\alpha\rho_{x_0}(x) &\text{if }\rho_{x_0}(x) > j, \end{cases}
\end{equation*}
\\
and set $f_j(x) := \exp(h_j)$. \\

We have the following estimate:

\begin{lem}
If $x\sim y$, for all $j\in\mathbb{Z}_+$,

\begin{equation}\label{ineq1}
(f_j(y)-f_j(x))^2 \leq \frac{(1-\exp(\alpha\rho(x,y)))^2}{1+\exp(2\alpha\rho(x,y))} (f^2_j(x)+f^2_j(y)).
\end{equation}
\end{lem}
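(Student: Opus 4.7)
The plan is to reduce the inequality, which involves vertex values of $f_j$ at two neighbours $x \sim y$, to a one-variable estimate by normalizing. Set $s := \alpha\rho(x,y) > 0$ and $t := f_j(y)/f_j(x) = \exp(h_j(y)-h_j(x))$. Since $f_j(x) > 0$, dividing the desired inequality by $f_j(x)^2$ yields the equivalent scalar inequality
\begin{equation*}
(t-1)^2 \leq \frac{(1-e^s)^2}{1+e^{2s}}(1+t^2),
\quad\text{i.e.,}\quad
g(t) \leq \frac{(e^s-1)^2}{1+e^{2s}}, \text{ where } g(t) := \frac{(t-1)^2}{1+t^2}.
\end{equation*}
So it suffices to (i) control the range of $t$, and (ii) maximize $g$ on that range.

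For (i), I would show that $h_j$ is $\alpha$-Lipschitz with respect to $\rho$. The key observation is that one can rewrite
\begin{equation*}
h_j(x) = \alpha\,\min\bigl(\rho_{x_0}(x),\,2j-\rho_{x_0}(x)\bigr),
\end{equation*}
which one checks by splitting into the cases $\rho_{x_0}(x) \leq j$ and $\rho_{x_0}(x) > j$. Since $\rho_{x_0}$ is $1$-Lipschitz by the triangle inequality, both $\alpha\rho_{x_0}$ and $\alpha(2j-\rho_{x_0})$ are $\alpha$-Lipschitz, and the minimum of two $\alpha$-Lipschitz functions is $\alpha$-Lipschitz. Therefore $|h_j(y)-h_j(x)| \leq \alpha\rho(x,y) = s$, and hence $t \in [e^{-s}, e^s]$.

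For (ii), I would write $g(t) = 1 - \tfrac{2t}{1+t^2}$. Since $\tfrac{2t}{1+t^2}$ is strictly increasing on $(0,1]$ and strictly decreasing on $[1,\infty)$ with maximum value $1$ at $t=1$, the function $g$ is strictly decreasing on $(0,1]$ and strictly increasing on $[1,\infty)$, with $g(1)=0$. Consequently $g$ attains its maximum on $[e^{-s}, e^s]$ at the endpoints, and a direct computation (multiplying numerator and denominator of $g(e^{-s})$ by $e^{2s}$) gives $g(e^s) = g(e^{-s}) = \tfrac{(e^s-1)^2}{1+e^{2s}}$, which matches the required bound exactly. The main obstacle is essentially cosmetic: verifying the Lipschitz property of the piecewise tent function $h_j$, which is resolved cleanly by the $\min$-representation; the rest is elementary one-variable calculus, and the fact that the inequality becomes an equality at both endpoints $t = e^{\pm s}$ suggests this inequality is the sharpest one of this form.
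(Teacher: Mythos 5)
Your proof is correct, and it reorganizes the argument in a way that is genuinely different from (and somewhat more robust than) the paper's. Both proofs ultimately rest on the same two ingredients---the bound $|h_j(y)-h_j(x)|\le\alpha\rho(x,y)$ and the behaviour of $u\mapsto(1-e^u)^2/(1+e^{2u})$---but the decompositions differ. The paper first proves an intermediate inequality in which $\rho(x,y)$ is replaced by $|\rho_{x_0}(y)-\rho_{x_0}(x)|$, via a two-case analysis: either $x$ and $y$ lie on the same side of the tent's peak (where equality holds), or they straddle it (where monotonicity of the function above on the positive reals is invoked together with $2j-\rho_{x_0}(x)-\rho_{x_0}(y)\le\rho_{x_0}(y)-\rho_{x_0}(x)$); it then relaxes $|\rho_{x_0}(y)-\rho_{x_0}(x)|$ to $\rho(x,y)$ by monotonicity once more. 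Your representation $h_j=\alpha\min\bigl(\rho_{x_0},\,2j-\rho_{x_0}\bigr)$ collapses that case analysis into the single statement that $h_j$ is $\alpha$-Lipschitz, and the normalization $t=f_j(y)/f_j(x)$ reduces everything to maximizing $g(t)=(t-1)^2/(1+t^2)$ over $[e^{-s},e^s]$, where the maximum sits at the endpoints and equals the claimed constant. Two remarks on what each route buys. First, your version quietly repairs a small imprecision in the paper: in the straddling case the exponent $2j-\rho_{x_0}(x)-\rho_{x_0}(y)$ can be negative, where ``increasing on the positive real line'' does not literally apply; one also needs the symmetry $g(e^{-u})=g(e^{u})$ together with $j\ge\rho_{x_0}(x)$, and your endpoint argument builds this in automatically. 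Second, the paper's longer route makes explicit exactly \emph{when} equality occurs---when $x$ and $y$ lie on the same side of the peak and $|\rho_{x_0}(y)-\rho_{x_0}(x)|=\rho(x,y)$---which is the mechanism behind the sharpness of Theorem~\ref{thm1} on regular trees; your proof records the same information more implicitly, as equality of $g$ at the endpoints $t=e^{\pm s}$.
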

\begin{proof}
Fix $j\in\mathbb{Z}_+$.  We prove first that for all $x\sim y$ and all $j\in\mathbb{Z}_+$, we have

\begin{equation}\label{ineq2}
(f_j(y)-f_j(x))^2 \leq \frac{(1-\exp(\alpha|\rho_{x_0}(y)-\rho_{x_0}(x)|))^2}{1+\exp(2\alpha|\rho_{x_0}(y)-\rho_{x_0}(x)|)} (f^2_j(x)+f^2_j(y)).
\end{equation}
\\
We may assume without loss of generality that $\rho_{x_0}(y)\geq \rho_{x_0}(x)$.  The proof proceeds by checking two cases. \\

{\it Case 1:} $j\geq \rho_{x_0}(y)\geq \rho_{x_0}(x)$ or $j\leq \rho_{x_0}(x)\leq \rho_{x_0}(y)$. \\

In this case, one verifies directly that there is equality in \eqref{ineq2}. \\

{\it Case 2:} $\rho_{x_0}(x)<j<\rho_{x_0}(y)$. \\

In this case, we begin by noting that the function $t\to \frac{(1-\exp(t))^2}{1+\exp(2t)}$ is increasing on the positive real line.  Using this, we get that

\begin{align*}
\frac{(f_j(y)-f_j(x))^2}{f^2_j(x)+f^2_j(y)} &= \frac{(1-\exp(\alpha(2j-(\rho_{x_0}(x)+\rho_{x_0}(y)))))^2}{1+\exp(2\alpha(2j-(\rho_{x_0}(x)+\rho_{x_0}(y))))} \\
&\leq \frac{(1-\exp(\alpha(\rho_{x_0}(y)-\rho_{x_0}(x)))^2}{1+\exp(2\alpha(\rho_{x_0}(y)-\rho_{x_0}(x)))},
\end{align*}
\\
where the inequality follows since $2j-(\rho_{x_0}(y)+\rho_{x_0}(x)) \leq \rho_{x_0}(y)-\rho_{x_0}(x)$ is equivalent to $j\leq \rho_{x_0}(y)$, which is true by hypothesis. \\
\end{proof}

Since the function $t\to \frac{(1-\exp(t))^2}{1+\exp(2t)}$ is increasing on the positive real line and $|\rho_{x_0}(x)-\rho_{x_0}(y)|\leq \rho(x,y)$, we have from \eqref{ineq2} that for all $x\sim y$ and all $j\in\mathbb{Z}_+$,

\begin{align*}
(f_j(y)-f_j(x))^2 &\leq \frac{(1-\exp(\alpha|\rho_{x_0}(y)-\rho_{x_0}(x)|)^2}{1+\exp(2\alpha|\rho_{x_0}(y)-\rho_{x_0}(x)|)} (f^2_j(x)+f^2_j(y)) \\
&\leq \frac{(1-\exp(\alpha\rho(x,y)))^2}{1+\exp(2\alpha\rho(x,y))} (f^2_j(x)+f^2_j(y)),
\end{align*}
\\
which completes the proof. \\

We will now use \eqref{ineq1} to prove two new bounds:

\begin{cor}\label{corThm1}
Suppose there exist constants $m,M>0$ such that $m \leq \rho(x,y)\leq M$ whenever $x\sim y$.  Then for all $x\sim y$ and $j\in\mathbb{Z}_+$,

\begin{equation*}
(f_j(y)-f_j(x))^2 \leq \frac{\rho^2(x,y)}{m^2}\frac{(1-\exp(\alpha M))^2}{1+\exp(2\alpha M)} (f^2_j(x)+f^2_j(y)).
\end{equation*}
\end{cor}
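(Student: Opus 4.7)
The plan is to deduce the corollary directly from the bound \eqref{ineq1} established in the previous lemma, using the two-sided control $m \le \rho(x,y) \le M$ to simplify the right-hand side into the desired form. The only real work is monotonicity; the factor $\rho^2(x,y)/m^2$ will be introduced trivially.

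First, I would start from
\begin{equation*}
(f_j(y)-f_j(x))^2 \le \frac{(1-\exp(\alpha\rho(x,y)))^2}{1+\exp(2\alpha\rho(x,y))}\bigl(f_j^2(x)+f_j^2(y)\bigr),
\end{equation*}
which is precisely the conclusion \eqref{ineq1} of the preceding lemma, valid for all $x \sim y$ and $j \in \mathbb{Z}_+$.

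Next, I would invoke the monotonicity fact already recorded in the proof of that lemma, namely that $t \mapsto (1-\exp(\alpha t))^2/(1+\exp(2\alpha t))$ is increasing on $[0,\infty)$. Since by hypothesis $\rho(x,y) \le M$, this immediately upgrades the previous inequality to
\begin{equation*}
(f_j(y)-f_j(x))^2 \le \frac{(1-\exp(\alpha M))^2}{1+\exp(2\alpha M)}\bigl(f_j^2(x)+f_j^2(y)\bigr).
\end{equation*}

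Finally, the lower bound $\rho(x,y) \ge m$ gives $1 \le \rho^2(x,y)/m^2$, and multiplying the right-hand side by this quantity only loosens the inequality, yielding the claimed bound. The "obstacle" here is not technical but motivational: the sole reason to insert this otherwise wasteful factor is to prepare for the subsequent Dirichlet form estimate, where summing in $y \sim x$ and appealing to adaptedness \eqref{adaptedness} will consume the $\rho^2(x,y)$ and produce a clean $L^2(\theta)$ bound in terms of $\theta_x$. I expect the calculation to be a handful of lines with no case analysis required.
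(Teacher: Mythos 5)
Your proof is correct and is essentially identical to the paper's own argument: both deduce the claim from \eqref{ineq1} by using the monotonicity of $t\mapsto (1-\exp(t))^2/(1+\exp(2t))$ on the positive reals (with $\rho(x,y)\leq M$) and then inserting the harmless factor $\rho^2(x,y)/m^2\geq 1$. Your closing remark about why the wasteful factor is inserted (to be consumed later by adaptedness in the Dirichlet form estimate) matches exactly how the paper uses this corollary in Lemma~\ref{DFcontrol}.
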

\begin{proof}
This follows immediately from $\frac{\rho^2(x,y)}{m^2}\geq 1$ and the fact that $t\to \frac{(1-\exp(t))^2}{1+\exp(2t)}$ is increasing on the positive real line.
\end{proof}
\begin{cor}\label{corThm2}
For $x\sim y$ and any $j\in\mathbb{Z}_+$,

\begin{equation*}
(f_j(y)-f_j(x))^2 \leq \rho^2(x,y)\frac{\alpha^2}{2}(f^2_j(x)+f^2_j(y)).
\end{equation*}
\end{cor}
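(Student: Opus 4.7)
The plan is to derive this corollary directly from inequality \eqref{ineq1} by proving a single elementary one-variable inequality. Setting $t := \alpha \rho(x,y) \geq 0$, the task reduces to showing
\begin{equation*}
\frac{(1-e^{t})^{2}}{1+e^{2t}} \leq \frac{t^{2}}{2} \qquad \text{for all } t \geq 0,
\end{equation*}
since multiplying this bound by $f_j^2(x)+f_j^2(y)$ and substituting back gives exactly the desired estimate.

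To establish the one-variable inequality cleanly, I would rewrite the left-hand side using hyperbolic functions. Factoring $e^{t}$ from the numerator and denominator gives $1+e^{2t} = 2e^{t}\cosh(t)$ and $(1-e^{t})^{2} = (e^{t}-1)^{2} = 2e^{t}(\cosh(t)-1)$, so
\begin{equation*}
\frac{(1-e^{t})^{2}}{1+e^{2t}} = \frac{\cosh(t)-1}{\cosh(t)} = 1 - \frac{1}{\cosh(t)}.
\end{equation*}
Then from the Taylor series bound $\cosh(t) \geq 1 + t^{2}/2$ we immediately get
\begin{equation*}
1 - \frac{1}{\cosh(t)} \leq 1 - \frac{1}{1+t^{2}/2} = \frac{t^{2}/2}{1+t^{2}/2} \leq \frac{t^{2}}{2},
\end{equation*}
which is the required bound.

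I do not anticipate a substantial obstacle here: the content of the corollary is essentially that the sharper estimate of the preceding lemma degrades gracefully to a quadratic bound in $\alpha \rho(x,y)$, and this is captured by a single scalar inequality. The only slightly non-routine point is recognizing that the $\cosh$-rewriting converts the awkward rational expression into $1 - \operatorname{sech}(t)$, at which stage the comparison with $t^{2}/2$ is transparent. An alternative, entirely routine approach would be to set $\phi(t) := t^{2}/2 - (1 - \operatorname{sech}(t))$ and verify $\phi(0)=0$ together with $\phi'(t) = t - \operatorname{sech}(t)\tanh(t) \geq 0$ using $\tanh(t) \leq t$ and $\operatorname{sech}(t) \leq 1$; both variants yield the corollary in a couple of lines.
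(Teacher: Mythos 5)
Your proposal is correct and follows exactly the paper's route: the paper likewise deduces the corollary from \eqref{ineq1} together with the scalar bound $\frac{(1-\exp(t))^2}{1+\exp(2t)}\leq \frac{t^2}{2}$ for $t\geq 0$, which it simply asserts without proof. Your $\cosh$-rewriting $\frac{(1-e^t)^2}{1+e^{2t}} = 1-\frac{1}{\cosh t}$ and the bound $\cosh t \geq 1+t^2/2$ supply a clean verification of that asserted inequality, but the overall argument is the same.
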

\begin{proof}
This follows immediately from the estimate $\frac{(1-\exp(t))^2}{1+\exp(2t)}\leq \frac{t^2}{2}$, which is valid for all $t\geq 0$.
\end{proof}

These estimates give us control of $\mathcal{E}(f_j,f_j)$, as follows:

\begin{lem}\label{DFcontrol}
Suppose that for $x\sim y$, $(f(y)-f(x))^2 \leq C\rho^2(x,y)(f^2(x)+f^2(y))$.  Then

\begin{equation*}
\mathcal{E}(f,f) \leq C\|f\|^2_{L^2(\theta)}.
\end{equation*}
\end{lem}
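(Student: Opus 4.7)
The plan is to substitute the pointwise hypothesis directly into the definition of $\mathcal{E}(f,f)$ and then recognize the adaptedness condition on $\rho$ inside the resulting sum. Explicitly, I would start from
\begin{equation*}
\mathcal{E}(f,f) = \frac{1}{2}\sum_{x,y\in G} \pi_{xy}(f(y)-f(x))^2
\end{equation*}
and apply the hypothesis edge-by-edge to bound this by
\begin{equation*}
\frac{C}{2}\sum_{x,y\in G} \pi_{xy}\rho^2(x,y)(f^2(x)+f^2(y)).
\end{equation*}

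Next I would split the right-hand side into two pieces according to whether $f^2(x)$ or $f^2(y)$ appears, and use the symmetries $\pi_{xy}=\pi_{yx}$ and $\rho(x,y)=\rho(y,x)$ to see that the two pieces are equal. This collapses the factor of $1/2$ and gives
\begin{equation*}
\mathcal{E}(f,f) \leq C\sum_{x\in G} f^2(x) \sum_{y\sim x} \pi_{xy}\rho^2(x,y).
\end{equation*}
Now the inner sum is exactly the expression appearing in the adaptedness condition \eqref{adaptedness}, so it is bounded by $\theta_x$. Substituting this bound gives $\mathcal{E}(f,f)\leq C\sum_x f^2(x)\theta_x = C\|f\|^2_{L^2(\theta)}$, which is the claim.

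There is no real obstacle here; the only point requiring minor care is justifying that the sums can be manipulated freely (interchanging order, splitting into two halves). If $f\in L^2(\theta)$ and the conclusion $\mathcal{E}(f,f)\leq C\|f\|^2_{L^2(\theta)}<\infty$ is to be meaningful, one should note that all terms are nonnegative so Tonelli applies, and the conclusion itself justifies that $f\in\mathcal{D}(\mathcal{E})$ (at least after checking that $f$ is approximable in the $\|\cdot\|_{H^1}$ norm by compactly supported functions, which in the applications of interest will be ensured because the relevant $f_j$ have finite support arising from the finiteness of balls $B_\rho(x_0,r)$ noted just before the lemma).
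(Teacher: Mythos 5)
Your proof is correct and is essentially identical to the paper's: substitute the pointwise hypothesis into the definition of $\mathcal{E}(f,f)$, use the symmetry of $\pi_{xy}$ and $\rho$ to collapse the factor of $\tfrac{1}{2}$, and then bound the inner sum $\sum_{y\sim x}\pi_{xy}\rho^2(x,y)$ by $\theta_x$ via the adaptedness condition. Your extra remarks (Tonelli for nonnegative terms, and deferring the $\mathcal{D}(\mathcal{E})$ membership question to the later application) only make explicit what the paper leaves implicit.
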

\begin{proof}
Using adaptedness of $\rho$, we have

\begin{align*}
\mathcal{E}(f,f) &:= \frac{1}{2}\sum_{x,y\in G}\pi_{xy}(f(y)-f(x))^2 \\
&\leq \frac{C}{2}\sum_{x\in G}\sum_{y\sim x}\pi_{xy}\rho^2(x,y)(f^2(x)+f^2(y)) \\
&= C \sum_{x\in G}\left(\frac{1}{\theta_x}\sum_{y\sim x} \pi_{xy}\rho^2(x,y)\right)f^2(x)\theta_x \\
&\leq C\|f\|^2_{L^2(\theta)}.
\end{align*}
\end{proof}

Combining Corollary~\ref{corThm1} and Corollary~\ref{corThm2} with Lemma~\ref{DFcontrol}, we get:

\begin{lem}
Suppose there exist constants $m,M>0$ such that $m \leq \rho(x,y)\leq M$ whenever $x\sim y$.  For all $j\in\mathbb{Z}_+$,

\begin{equation*}\label{thm1DF}
\mathcal{E}(f_j,f_j) \leq \frac{1}{m^2}\frac{(1-\exp(\alpha M))^2}{1+\exp(2\alpha M)}\|f_j\|^2_{L^2(\theta)}.
\end{equation*}
\end{lem}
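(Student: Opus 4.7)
The statement is essentially a direct combination of two results already proved in the text, so the plan is short. The hypothesis $m\le \rho(x,y)\le M$ for $x\sim y$ is exactly the hypothesis of Corollary~\ref{corThm1}, which gives the pointwise bound
\begin{equation*}
(f_j(y)-f_j(x))^2 \leq \frac{\rho^2(x,y)}{m^2}\frac{(1-\exp(\alpha M))^2}{1+\exp(2\alpha M)} (f^2_j(x)+f^2_j(y)).
\end{equation*}
This is exactly the inequality needed to apply Lemma~\ref{DFcontrol}, with the constant
\begin{equation*}
C = \frac{1}{m^2}\frac{(1-\exp(\alpha M))^2}{1+\exp(2\alpha M)}.
\end{equation*}

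So my plan is simply: first invoke Corollary~\ref{corThm1} for each edge $\{x,y\}$ in $E$, then feed the resulting pointwise estimate into Lemma~\ref{DFcontrol}, which converts such a pointwise bound into a Dirichlet-form bound using adaptedness of $\rho$. The conclusion $\mathcal{E}(f_j,f_j)\le C\|f_j\|^2_{L^2(\theta)}$ is exactly the claim.

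There is no real obstacle here — both ingredients are already established, the constant matches, and no additional properties of $f_j$ (such as membership in $\mathcal{D}(\mathcal{E})$) are needed at this step beyond what was used in Lemma~\ref{DFcontrol}. The only thing worth double-checking in the write-up is that the quantity $C$ is nonnegative and finite, which is clear since $\alpha,m,M>0$, and that the chain $(1-\exp(\alpha M))^2/(1+\exp(2\alpha M))$ is the same expression appearing in both the hypothesis-producing step (Corollary~\ref{corThm1}) and the final statement — no rearrangement is required.
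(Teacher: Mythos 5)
Your proposal is correct and matches the paper's own argument exactly: the paper obtains this lemma by the single sentence ``Combining Corollary~\ref{corThm1} and Corollary~\ref{corThm2} with Lemma~\ref{DFcontrol}, we get,'' which is precisely your step of feeding the edgewise bound of Corollary~\ref{corThm1} into Lemma~\ref{DFcontrol} with $C = \frac{1}{m^2}\frac{(1-\exp(\alpha M))^2}{1+\exp(2\alpha M)}$. Nothing further is needed.
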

\begin{lem}
For all $j\in\mathbb{Z}_+$,

\begin{equation*}\label{thm2DF}
\mathcal{E}(f_j,f_j) \leq \frac{\alpha^2}{2}\|f_j\|^2_{L^2(\theta)}.
\end{equation*}
\end{lem}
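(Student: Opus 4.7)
The plan is to invoke Lemma~\ref{DFcontrol} with the pointwise edge bound supplied by Corollary~\ref{corThm2}. Specifically, Corollary~\ref{corThm2} gives, for every $x\sim y$ and every $j\in\mathbb{Z}_+$,
\[
(f_j(y)-f_j(x))^2 \leq \frac{\alpha^2}{2}\,\rho^2(x,y)\,(f_j^2(x)+f_j^2(y)),
\]
which is precisely the hypothesis of Lemma~\ref{DFcontrol} with the constant $C = \alpha^2/2$ and with $f$ taken to be $f_j$. Applying that lemma therefore yields
\[
\mathcal{E}(f_j,f_j) \leq \frac{\alpha^2}{2}\,\|f_j\|^2_{L^2(\theta)},
\]
which is the desired inequality, so the proof is a one-line combination.

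There is no substantive obstacle here; the lemma is essentially a packaging step, and the real work sits in the two ingredients already established. Corollary~\ref{corThm2} rests on the elementary scalar inequality $(1-\exp(t))^2/(1+\exp(2t)) \leq t^2/2$ for $t\geq 0$, applied edge by edge to the sharper bound \eqref{ineq1}. Lemma~\ref{DFcontrol} in turn converts such pointwise edge bounds into an $L^2(\theta)$ bound by interchanging the double sum over edges and invoking the adaptedness condition \eqref{adaptedness}, namely $\frac{1}{\theta_x}\sum_{y\sim x}\pi_{xy}\rho^2(x,y)\leq 1$ at each vertex $x$. The present lemma is the input that will drive the proof of Theorem~\ref{thm2}, exactly as its companion (obtained from Corollary~\ref{corThm1}) is the input that drives Theorem~\ref{thm1}.
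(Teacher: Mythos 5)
Your proposal is correct and matches the paper's own argument exactly: the paper obtains this lemma by combining Corollary~\ref{corThm2} (the pointwise edge bound with constant $\alpha^2/2$) with Lemma~\ref{DFcontrol}, which is precisely your one-line packaging step. Nothing further is needed.
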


Now, to combine the previous results, we assume that there exists a positive, increasing, continuous function $t \to I_\rho(t)$ such that 

\begin{equation} \label{DFcombined}
\mathcal{E}(f_j,f_j) \leq I_\rho(\alpha)\|f_j\|^2_{L^2(\theta)}.
\end{equation}
\\
Let $K$ be a finite set, and define $M_K := \max\{\rho(x_0,x):x\in K\}$ and $g_j := f_j(1-{\bf 1}_K)$.  Note that by construction, $B_\rho(x_0,M_K)\supset K$. \\

\begin{lem}\label{lem2Ng}
For all $j\in\mathbb{Z}_+$, $\langle g_j,g_j\rangle_{L^2(\theta)} < \infty$, and

\begin{equation*}
\lim_{j\to \infty} \langle g_j,g_j\rangle_{L^2(\theta)} = \infty.
\end{equation*}
\end{lem}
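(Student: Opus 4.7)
The plan is to prove the two assertions separately, tacitly in the regime $\mu<2\alpha$ so that the first lemma of this section applies (this is the range in which the full spectral argument will eventually be run; for $2\alpha\leq\mu$ the whole construction is useless).

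For finiteness at fixed $j$, I would note that $|g_j|\leq f_j$ pointwise and bound $\|f_j\|^2_{L^2(\theta)}$ by splitting $G$ along the ridge $\{\rho_{x_0}=j\}$. On $B_\rho(x_0,j)$ one has $f_j^2\leq \exp(2\alpha j)$, and the ball contains only finitely many vertices of finite total $\theta$-mass (as noted just before the lemma, $\mu<\infty$ combined with \eqref{thetalb} forces $\rho$-balls to be finite sets), so this piece contributes at most $\exp(2\alpha j)\,V_\rho(x_0,j)<\infty$. On the complement $\{\rho_{x_0}>j\}$, the definition of $h_j$ gives $f_j^2(x)=\exp(4\alpha j)\exp(-2\alpha\rho_{x_0}(x))$, so the tail is dominated by $\exp(4\alpha j)\,\langle\exp(-\alpha\rho_{x_0}),\exp(-\alpha\rho_{x_0})\rangle_{L^2(\theta)}$, which is finite by the first lemma.

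For the divergence, the key observation is that $j\mapsto h_j(x)$ is monotone non-decreasing for each fixed $x$: once $j\geq\rho_{x_0}(x)$ the value stabilizes at $\alpha\rho_{x_0}(x)$, while for $j<\rho_{x_0}(x)$ it equals $2\alpha j-\alpha\rho_{x_0}(x)$, which is visibly increasing in $j$. Hence $g_j^2$ is pointwise monotone non-decreasing with pointwise limit $\exp(2\alpha\rho_{x_0})(1-{\bf 1}_K)$, and monotone convergence yields
\[
\lim_{j\to\infty}\|g_j\|^2_{L^2(\theta)}=\sum_{x\in G\setminus K}\exp(2\alpha\rho_{x_0}(x))\,\theta_x \;\geq\; \sum_{x\in G\setminus K}\theta_x=\infty,
\]
where the final equality uses \eqref{thetalb} applied to the connected infinite set $G$ (giving $\sum_{x\in G}\theta_x=\infty$) together with the finiteness of $K$. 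No step poses a real obstacle; the only item to keep track of is the tacit working hypothesis $\mu<2\alpha$, which is precisely what licenses the appeal to the first lemma for the finiteness bound, and the monotonicity of the tent exponents, which is immediate from their piecewise definition.
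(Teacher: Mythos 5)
Your proof is correct, and it splits cleanly into a half that matches the paper and a half that does not. The finiteness argument is exactly the paper's: decompose at $\{\rho_{x_0}=j\}$, bound $g_j^2\leq f_j^2\leq \exp(2\alpha j)$ on $B_\rho(x_0,j)$ and $g_j^2\leq f_j^2 = \exp(4\alpha j)\exp(-2\alpha\rho_{x_0})$ on the complement, and invoke the finiteness of $\langle \exp(-\alpha\rho_{x_0}),\exp(-\alpha\rho_{x_0})\rangle_{L^2(\theta)}$; you are also right that $\mu<2\alpha$ is a tacit standing hypothesis here, which is consistent with how the lemma is deployed in the proof of Theorem~\ref{endthm}, where $\alpha$ is chosen with $\mu<2\alpha$. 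For the divergence, however, you take a genuinely different route: writing $h_j(x)=\alpha\rho_{x_0}(x)-2\alpha(\rho_{x_0}(x)-j)_+$ makes the monotonicity in $j$ transparent (including across the crossover $j<\rho_{x_0}(x)\leq j+1$), so $g_j^2\uparrow \exp(2\alpha\rho_{x_0})(1-{\bf 1}_K)$ pointwise, and monotone convergence together with \eqref{thetalb} (infinite total $\theta$-mass of $G$, finite mass of the finite set $K$) forces the limit to be infinite. The paper argues instead that for $j>M_K$ one has $g_j=f_j\geq 1$ on the annulus $B_\rho(x_0,j)\setminus B_\rho(x_0,M_K)$, so that $\langle g_j,g_j\rangle_{L^2(\theta)}\geq V_\rho(x_0,j)-V_\rho(x_0,M_K)$, with $V_\rho(x_0,j)\uparrow\infty$ again by \eqref{thetalb}. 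Both arguments rest on the same two facts (the functions dominate $1$ off a finite set, and the total mass is infinite), but they package them differently: the paper's annulus bound is quantitative, giving an explicit lower bound at each finite $j$ in terms of ball volumes, whereas yours identifies the exact limiting value $\sum_{x\in G\setminus K}\exp(2\alpha\rho_{x_0}(x))\theta_x$ but carries no rate information. Either version suffices for the lemma and for the contradiction argument downstream.
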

\begin{proof}
First, for $j\in\mathbb{Z}_+$,

\begin{align*}
\langle g_j,g_j\rangle_{L^2(\theta)} &= \sum_{x\in B_\rho(x_0,j)} g^2_j(x)\theta_x+\sum_{x\in G\setminus B_\rho(x_0,j)} g^2_j(x)\theta_x \\
&\leq \exp(2\alpha j)V_\rho(x_0,j)+\exp(4\alpha j)\langle \exp(-\alpha\rho_{x_0}),\exp(-\alpha\rho_{x_0})\rangle_{L^2(\theta)}.
\end{align*}
\\
For the second part, if $j>M_K$, then since $g_j \geq 1$ on $B_\rho(x_0,j)\setminus B_\rho(x_0,M_K)$, it follows that 

\begin{equation*}
\langle g_j,g_j\rangle_{L^2(\theta)} \geq V_\rho(x_0,j)-V_\rho(x_0,M_K),
\end{equation*}
\\
and since $V_\rho(x_0,M_K)<\infty$ and $V_\rho(x_0,j)\uparrow \infty$ as $j\to\infty$ (using \eqref{thetalb}), the result follows.
\end{proof}

Next, we estimate the Dirichlet forms $\mathcal{E}(g_j,g_j)$ as follows:

\begin{lem}\label{lemDFg}
There exists a constant $c(\rho,\alpha,K)$ such that for $j\in\mathbb{Z}_+$,

\begin{equation*}
\mathcal{E}(g_j,g_j) \leq c(\rho,\alpha,K)+I_\rho(\alpha)\langle g_j,g_j\rangle_{L^2(\theta)}.
\end{equation*}
\\
Additionally, $g_j\in \mathcal{D}(\mathcal{E})$ for each $j\in\mathbb{Z}_+$.
\end{lem}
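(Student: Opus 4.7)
The plan is to split $\mathcal{E}(g_j,g_j)$ according to the location of edges relative to the finite set $K$, and then compare with the bound \eqref{DFcombined} on $\mathcal{E}(f_j,f_j)$.  Since $g_j$ vanishes on $K$ and coincides with $f_j$ on $G\setminus K$, unordered pairs $\{x,y\}\subset K$ contribute nothing; pairs $\{x,y\}\subset G\setminus K$ contribute $\pi_{xy}(f_j(y)-f_j(x))^2$, exactly as for $f_j$; and "boundary" pairs with $x\in K$, $y\notin K$ contribute $\pi_{xy}f_j(y)^2$.  Discarding the nonpositive missing within-$K$ terms gives
\begin{equation*}
\mathcal{E}(g_j,g_j) \leq \mathcal{E}(f_j,f_j) + \sum_{\substack{x\in K,\,y\notin K\\ y\sim x}}\pi_{xy}f_j(y)^2.
\end{equation*}

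Next I would bound each summand.  For the first, \eqref{DFcombined} gives $\mathcal{E}(f_j,f_j)\leq I_\rho(\alpha)\|f_j\|^2_{L^2(\theta)}$, while the disjoint-support identity $\|f_j\|^2_{L^2(\theta)} = \|g_j\|^2_{L^2(\theta)} + \|f_j\mathbf{1}_K\|^2_{L^2(\theta)}$ together with the pointwise bound $f_j\leq\exp(\alpha M_K)$ on $K$ yields $\|f_j\|^2_{L^2(\theta)} \leq \|g_j\|^2_{L^2(\theta)} + \exp(2\alpha M_K)\theta(K)$.  For the boundary sum, finiteness of $K$ combined with local finiteness of $\Gamma$ implies $\partial K := \{y\in G\setminus K : y\sim x\text{ for some }x\in K\}$ is finite; every $y\in\partial K$ satisfies $\rho_{x_0}(y)\leq M_K + c_\rho$, so by the tent structure $f_j(y)\leq \exp(\alpha(M_K+c_\rho))$ uniformly in $j$.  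The boundary sum is therefore at most $\exp(2\alpha(M_K+c_\rho))\sum_{y\in\partial K}\pi_y$, a finite constant depending only on $\rho,\alpha,K$.  Collecting constants produces the claimed inequality.

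For $g_j\in\mathcal{D}(\mathcal{E})$, I would write $g_j = f_j - f_j\mathbf{1}_K$, where $f_j\mathbf{1}_K\in C_c(G)\subset\mathcal{D}(\mathcal{E})$, and then show $f_j\in\mathcal{D}(\mathcal{E})$ by approximating it in the $H^1$ norm by the compactly supported functions $f_j\mathbf{1}_{B_\rho(x_0,N)}$ (the balls being finite since $\mu<\infty$).  The $L^2$-convergence uses the exponential tail $f_j(x)\leq\exp(2\alpha j)\exp(-\alpha\rho_{x_0}(x))$ together with summability of $\exp(-2\alpha\rho_{x_0}(\cdot))$ against $\theta$ established in the first lemma of this section (valid since the standing choice $\alpha > \mu/2$ is implicit).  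The harder part, and the main obstacle, is the Dirichlet-form convergence $\mathcal{E}(f_j\mathbf{1}_{G\setminus B_\rho(x_0,N)},\,\cdot)\to 0$: I would attack this by repeating the two-part decomposition above with $B_\rho(x_0,N)$ in place of $K$.  The interior tail is controlled via Corollary~\ref{corThm2} and adaptedness by $\frac{\alpha^2}{2}\sum_{x\notin B_\rho(x_0,N)}f_j^2(x)\theta_x$, which vanishes by dominated convergence; the boundary contribution across $\partial B_\rho(x_0,N)$ must be handled by a more delicate estimate combining the exponential decay of $f_j$ outside $B_\rho(x_0,j)$ with adaptedness to convert the edge-weighted sum into a $\theta$-weighted one that vanishes as $N\to\infty$.
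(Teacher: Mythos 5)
Your first inequality is fine, and it is essentially the paper's own argument: the paper lumps every edge touched by the cutoff into the ball $B_\rho(x_0,M_K+c_\rho)$ and bounds that contribution crudely by $\pi(B_\rho(x_0,M_K+c_\rho))\exp(2\alpha(M_K+c_\rho))$, then compares $\|f_j\|^2_{L^2(\theta)}$ with $\|g_j\|^2_{L^2(\theta)}$ exactly as you do; your edge-by-edge decomposition is marginally sharper bookkeeping but yields the same conclusion with a comparable constant.

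The gap is in the membership claim $g_j\in\mathcal{D}(\mathcal{E})$. Your approximants $f_j\mathbf{1}_{B_\rho(x_0,N)}$ need not converge to $f_j$ in $H^1$, and the ``more delicate estimate'' you defer to does not exist in the generality of this paper. The crossing terms
\begin{equation*}
\sum_{x\in B_\rho(x_0,N)}\ \sum_{\substack{y\sim x \\ y\notin B_\rho(x_0,N)}}\pi_{xy}\,f_j(y)^2
\end{equation*}
carry no factor of $\rho^2(x,y)$, and adaptedness \eqref{adaptedness} only controls $\pi_{xy}\rho^2(x,y)$: when $\mathcal{L}_\theta$ is unbounded there are adjacent vertices with $\rho(x,y)$ arbitrarily small and $\pi_{xy}$ correspondingly enormous, so the edge-weighted boundary sum cannot be converted into a $\theta$-weighted one. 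The paper's own Example 2 already defeats your scheme: on $\mathbb{Z}_+$ with $\pi_{n,n+1}=(n+1)^2$, $\theta\equiv\mathds{1}$, and $\rho=\frac{1}{\sqrt{2}}d_E$, one has $\rho_{x_0}(n)\approx\frac{1}{\sqrt{2}}\log n$ and $\mu=\sqrt{2}$; the sphere of radius $N$ cuts exactly one edge $\{n_N,n_N+1\}$ with $n_N\approx e^{\sqrt{2}N}$, and the crossing term is of order $(n_N)^2 f_j(n_N)^2 \asymp e^{4\alpha j}\,n_N^{\,2-\sqrt{2}\alpha}$, which tends to infinity for every $\alpha\in(\tfrac{1}{\sqrt{2}},\sqrt{2})$, i.e.\ precisely in the regime $\alpha\downarrow\mu/2$ that the proof of Theorem~\ref{endthm} requires. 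Since the graph is a half-line, no alternative choice of radii helps: every sharp spatial cutoff fails, even though $g_j\in\mathcal{D}(\mathcal{E})$ is true.

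The paper avoids this by truncating in range rather than in space: it sets $g_j^\varepsilon:=(g_j-\varepsilon)_+$. Since $g_j\leq\varepsilon$ outside $B_\rho(x_0,\,j\vee(2j-\alpha^{-1}\log\varepsilon))$ and balls are finite sets (this is where $\mu<\infty$ and \eqref{thetalb} enter), $g_j^\varepsilon\in C_c(G)$; and since $t\mapsto(t-\varepsilon)_+$ is a normal contraction, $|g_j^\varepsilon(y)-g_j^\varepsilon(x)|\leq|g_j(y)-g_j(x)|$ on every edge, so $\mathcal{E}(g_j^\varepsilon,g_j^\varepsilon)\leq\mathcal{E}(g_j,g_j)$ and no boundary terms are created at all. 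Together with $\|g_j\|_{H^1}<\infty$ (which your first part supplies) and $g_j^\varepsilon\uparrow g_j$, this gives $\|g_j^\varepsilon-g_j\|_{H^1}\to 0$, hence $g_j\in\mathcal{D}(\mathcal{E})$. If you prefer multiplicative cutoffs, they must be $1$-Lipschitz in $\rho$ rather than indicators; the essential point your proposal misses is that admissible truncations must not introduce jumps across edges that $g_j$ itself does not have.
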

\begin{proof}
First, we note that $g_j = f_j$ on $G\setminus B_\rho(x_0,M_K)$, and if $r\geq M_K$, on $B_\rho(x_0,r)$, we have $0\leq g_j \leq f_j \leq \exp(\alpha r)$. \\

Consequently, we have the crude estimates

\begin{align}
\nonumber \mathcal{E}(g_j,g_j) &\leq \mathcal{E}(f_j,f_j) + \pi(B_\rho(x_0,M_K+c_\rho))\sup_{x\in B_\rho(x_0,M_K+c_\rho)} |g^2_j(x)| \\
&\leq \mathcal{E}(f_j,f_j) + \pi(B_\rho(x_0,M_K+c_\rho))\exp(2\alpha(M_K+c_\rho)). \label{est1}
\end{align}
\\
The reason for using the larger distance $M_K+c_\rho$ is that all edges with at least one end in $B_\rho(x_0,M_K)$ have both ends in $B_\rho(x_0,M_K+c_\rho)$.  We also have the simple estimate

\begin{equation} \label{est2}
\langle f_j,f_j\rangle_{L^2(\theta)} \leq \langle g_j,g_j\rangle_{L^2(\theta)} +\exp(2\alpha M_K)V_\rho(x_0,M_K).
\end{equation}
\\
Combining \eqref{est1} with \eqref{DFcombined} and \eqref{est2} gives the desired result, with

\begin{equation*}
c(\rho,\alpha,K) = \exp(2\alpha M_K)(\pi(B_\rho(x_0,M_K+c_\rho))\exp(2\alpha c_\rho)+I_\rho(\alpha)V_\rho(x_0,M_K)).
\end{equation*}
\\
Next, we note that for each $j\in\mathbb{Z}_+$ and $\varepsilon>0$, if  $R^\varepsilon_j \geq j \vee \left(2j-\frac{\log \varepsilon}{\alpha}\right)$, then on $G\setminus B_\rho(x_0,R^\varepsilon_j)$, $g_k \leq \varepsilon$.  Since $B_\rho(x_0,R_\varepsilon)$ is finite, $g^\varepsilon_k := (g-\varepsilon)_+\in C_c(G)$.  Now, since $g^\varepsilon_k \uparrow g_k$ as $\varepsilon\downarrow 0$.  Since $\mathcal{E}(g^\varepsilon_k,g^\varepsilon_k) \leq \mathcal{E}(g_k,g_k)$ and $\|g_k\|_{H^1} < \infty$, we conclude that $\|g^\varepsilon_k-g_k\|_{H^1} \to 0$ as $\varepsilon\downarrow 0$, and hence $g_k \in \mathcal{D}(\mathcal{E})$.
\end{proof}

Finally, we can show the following result:

\begin{thm}\label{endthm}
Assume that $\rho$ is such that \eqref{DFcombined} holds.  Then

\begin{equation*}
\lambda^{ess}_0(\Gamma,\pi,\theta) \leq I_\rho(\mu/2).
\end{equation*}
\end{thm}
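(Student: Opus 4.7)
The plan is to combine the exhaustion formula
\[
\lambda^{ess}_0(\Gamma,\pi,\theta) = \lim_{k\to\infty} \lambda^{G\setminus G_k}_0(\Gamma,\pi,\theta)
\]
stated earlier with the variational characterization of each Dirichlet eigenvalue $\lambda^{G\setminus K}_0(\Gamma,\pi,\theta)$. Because the $g_j$ vanish on $K$, they are legitimate test functions for the Dirichlet Laplacian on $G\setminus K$, so producing a sequence whose Rayleigh quotient tends to at most $I_\rho(\alpha)$ will give $\lambda^{G\setminus K}_0\le I_\rho(\alpha)$ for \emph{every} finite $K$, hence the same bound on $\lambda^{ess}_0$.

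First I would fix an arbitrary $\alpha>\mu/2$, so that the first lemma of this section guarantees $\langle \exp(-\alpha\rho_{x_0}),\exp(-\alpha\rho_{x_0})\rangle_{L^2(\theta)}<\infty$, and fix a finite $K\subset G$. Since Lemma~\ref{lemDFg} gives $g_j\in\mathcal{D}(\mathcal{E})$ together with the estimate
\[
\mathcal{E}(g_j,g_j)\le c(\rho,\alpha,K)+I_\rho(\alpha)\,\langle g_j,g_j\rangle_{L^2(\theta)},
\]
and Lemma~\ref{lem2Ng} gives $\|g_j\|^2_{L^2(\theta)}<\infty$ with $\|g_j\|^2_{L^2(\theta)}\to\infty$ as $j\to\infty$, dividing and taking $j\to\infty$ yields
\[
\lambda^{G\setminus K}_0(\Gamma,\pi,\theta)\le \limsup_{j\to\infty}\frac{\mathcal{E}(g_j,g_j)}{\|g_j\|^2_{L^2(\theta)}}\le I_\rho(\alpha).
\]

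Second, taking $K=G_k$ for an exhaustion $(G_k)_{k\in\mathbb{Z}_+}$ of $G$ and passing to the limit in $k$ gives $\lambda^{ess}_0(\Gamma,\pi,\theta)\le I_\rho(\alpha)$. Finally, letting $\alpha\downarrow \mu/2$ and invoking continuity of $I_\rho$ produces the claimed bound
\[
\lambda^{ess}_0(\Gamma,\pi,\theta)\le I_\rho(\mu/2).
\]

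Essentially all of the analytic work has already been absorbed into the preceding lemmas, so the only obstacle worth flagging is the bookkeeping step of confirming that the $g_j$ lie in the form domain of $\mathcal{L}^{G\setminus K}_\theta$. This is exactly $\mathcal{D}(\mathcal{E})\cap\{f:f|_K=0\}$, and both inclusions hold for $g_j$ by construction and by Lemma~\ref{lemDFg}, so the Rayleigh-quotient bound applies without any additional approximation argument. The requirement $\alpha>\mu/2$ (as opposed to $\alpha\ge\mu/2$) is only needed to ensure convergence of the weighted sum controlling $\|g_j\|^2_{L^2(\theta)}$ and the error term $c(\rho,\alpha,K)$, and is removed at the end by monotonicity and continuity of $I_\rho$.
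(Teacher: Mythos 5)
Your proposal is correct and uses essentially the same argument as the paper: the same test functions $g_j$, the same two lemmas (Lemma~\ref{lem2Ng} and Lemma~\ref{lemDFg}), and the same variational characterization of $\lambda^{G\setminus K}_0(\Gamma,\pi,\theta)$. The only difference is organizational --- you argue directly, bounding $\lambda^{G\setminus K}_0(\Gamma,\pi,\theta) \leq I_\rho(\alpha)$ for every finite $K$ and every $\alpha > \mu/2$ and then letting $\alpha \downarrow \mu/2$ by continuity of $I_\rho$, whereas the paper packages the identical estimate as a proof by contradiction, using continuity to select a single $\alpha$ with $I_\rho(\mu/2) < I_\rho(\alpha) < \lambda^{G\setminus K}_0(\Gamma,\pi,\theta)$.
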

\begin{proof}
We argue by contradiction.  Suppose that 

\begin{equation*}
I_\rho(\mu/2) \leq \lambda^{ess}_0(\Gamma,\pi,\theta).
\end{equation*}
\\
By definition, there exists a finite subset $K$ such that $I_\rho(\mu/2) < \lambda^{G\setminus K}_0(\Gamma,\pi,\theta)$.  Since $t\to I_\rho(t)$ is increasing and continuous, we can find a constant $\alpha$ such that $\mu<2\alpha$ and such that

\begin{equation*}
I_\rho(\mu/2) < I_\rho(\alpha) < \lambda^{G\setminus K}_0(\Gamma,\pi,\theta).
\end{equation*}
\\
On the other hand, from Lemma~\ref{lemDFg}

\begin{equation*}
\frac{\mathcal{E}(g_j,g_j)}{\langle g_j,g_j\rangle_{L^2(\theta)}} \leq \frac{c(\rho,\alpha,K)}{\langle g_j,g_j\rangle_{L^2(\theta)}}+I_\rho(\alpha),
\end{equation*}
\\
so letting $j\to\infty$ and using Lemma~\ref{lem2Ng} shows if $j_0$ is chosen sufficiently large, then

\begin{equation*}
\frac{\mathcal{E}(g_{j_0},g_{j_0})}{\langle g_{j_0},g_{j_0}\rangle_{L^2(\theta)}} \leq I_\rho(\alpha) < \lambda^{G\setminus K}_0(\Gamma,\pi,\theta).
\end{equation*}
\\
This is a contradiction, since $g_{j_0}|_K=0$, and

\begin{equation*}
\lambda^{G\setminus K}_0(\Gamma,\pi,\theta) := \inf\left\{\frac{\mathcal{E}(f,f)}{\langle f,f\rangle_{L^2(\theta)}}:f\in \mathcal{D}(\mathcal{E}), f\not=0, f|_K=0\right\}.
\end{equation*}
\\
We conclude that $\lambda^{ess}_0(\Gamma,\pi,\theta) \leq I_\rho(\mu/2)$.
\end{proof}

Combining Lemma~\ref{thm1DF} with Theorem~\ref{endthm} proves Theorem~\ref{thm1}, and combining Lemma~\ref{thm2DF} with Theorem~\ref{endthm} proves Theorem~\ref{thm2}. \\

{\bf Remark:}  We briefly discuss the relevance of the condition \eqref{thetalb} to this part of the proof.  Under the assumption $\mu < \infty$, all balls have finite volume but may contain infinitely many points if \eqref{thetalb} does not hold.  Consequently, the function $g_k$ may not belong to $\mathcal{D}(\mathcal{E})$.  One may then wish to take the domain of our Dirichlet form to be the larger set $\mathcal{D}_{max}(\mathcal{E}) := \{f\in C(G):\mathcal{E}_1(f,f)<\infty\}$, and Lemma~\ref{lemDFg} shows that $g_k\in \mathcal{D}_{max}(\mathcal{E})$.  However, the Dirichlet form $(\mathcal{E},\mathcal{D}_{max}(\mathcal{E}))$ may then fail to be regular.

\section{Discreteness of the spectrum}

If the operator $\mathcal{L}_\theta$ is bounded on $L^2(\theta)$, then the essential spectrum is nonempty.  For graphs with the standard weights and the operator $\mathcal{L}_\pi$, Theorem 1 of \cite{Fu2} establishes necessary and sufficient condition for the essential spectrum to consist of a single point in terms of isoperimetric quantities. \\

If $\mathcal{L}_\theta$ is unbounded, it is possible for the essential spectrum to be empty (or, equivalently, for the spectrum to be discrete).  Graphs on which this phenomenon occurs (using the operator $\mathcal{L}_{\mathds{1}}$) are given in Examples 2 and 3 of Section 4.  By Corollary~\ref{cor2}, a necessary condition for absence of essential spectrum is that for any adapted metric $\rho$, $VG(\Gamma,\pi,\theta,\rho) = \infty$.  Examples 2 and 3 also show that for any $\varepsilon>0$, it is possible to find a graph $(\Gamma,\pi)$ and a metric $\rho$ adapted to $\mathcal{L}_{\mathds{1}}$ such that $VG(\Gamma,\pi,\mathds{1},\rho) = \infty$ and $\mathcal{L}_{\mathds{1}}$ has empty essential spectrum, but for every $x_0\in G$,

\begin{equation*}
\lim_{r\to\infty} \frac{1}{r^{1+\varepsilon}} \log V_\rho(x_0,r) = 0.
\end{equation*}
\\
In particular, in these examples the essential spectrum becomes empty precisely as the volume growth changes from exponential to superexponential.  Consequently, this condition may be optimal in applications. \\

There are connections with stochastic incompleteness of graphs.  In \cite{KLVW}, the following result is established:

\begin{thm}
(Keller-Lenz-Vogt-Wojciechowski, \cite{KLVW})
Let $(\Gamma,\pi,\theta)$ be a weakly spherically symmetric graph.  If $(\Gamma,\pi,\theta)$ is stochastically incomplete (i.e., if the continuous-time random walk associated with $\mathcal{L}_\theta$ explodes with positive probability), then $\lambda_0(\Gamma,\pi,\theta)>0$ and $\sigma_{ess}(\Gamma,\pi,\theta) = \varnothing$.
\end{thm}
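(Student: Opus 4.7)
The plan is to reduce the spectral problem to a one-dimensional birth-death chain using the spherical symmetry, then convert the classical summability characterization of stochastic incompleteness into weighted Hardy inequalities that deliver both the positive spectral gap and the discreteness of the spectrum.

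First I would set up the spherical reduction. Let $S_n := \{x \in G : d(x_0,x) = n\}$ and let $H_0 \subset L^2(\theta)$ be the closed subspace of functions which are constant on each $S_n$. Weak spherical symmetry is precisely the condition that $\theta_x^{-1}\pi(x, S_{n\pm 1})$ depends only on $n$ when $x \in S_n$, and this forces $H_0$ to be $\mathcal{L}_\theta$-invariant. The restriction $\mathcal{L}_\theta|_{H_0}$ is unitarily equivalent to a Jacobi (birth-death) operator $J$ on $\ell^2(\mathbb{Z}_+, \mu)$, where $\mu_n := \theta(S_n)$ and the forward/backward rates $k^{\pm}(n)$ are inherited from the sphere-to-sphere weights. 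The continuous-time random walk on $\Gamma$ projects to a 1D birth-death chain with generator $J$, so stochastic incompleteness of $(\Gamma,\pi,\theta)$ is equivalent to stochastic incompleteness of the reduced chain. The orthogonal complement $H_0^\perp$ splits into angular modes living on individual spheres, and under weak symmetry these modes have Dirichlet forms uniformly coercive (bounded below by a strictly positive multiple of the $L^2$ norm), so they do not push the bottom of the spectrum below $\lambda_0(J)$.

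Second, I would translate stochastic incompleteness of the birth-death chain into a weighted Hardy inequality. Feller's criterion characterizes explosion by the finiteness of
\begin{equation*}
\sum_{n\geq 0} \frac{1}{k^+(n)\mu_n}\sum_{j\leq n}\mu_j.
\end{equation*}
By Muckenhoupt's theorem adapted to the discrete weighted setting, this summability is equivalent to the existence of a constant $C > 0$ for which
\begin{equation*}
\sum_n k^+(n)\mu_n |f(n+1)-f(n)|^2 \geq C\sum_n \mu_n |f(n)|^2
\end{equation*}
for every $f \in C_c(\mathbb{Z}_+)$. The left-hand side is exactly $\mathcal{E}(f,f)$ once $f$ is identified with a radial function on $\Gamma$, so combined with the angular bound from the first step we obtain $\lambda_0(\Gamma,\pi,\theta) \geq C > 0$.

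Third, for $\sigma_{ess}(\Gamma,\pi,\theta) = \varnothing$, I would run the same Muckenhoupt argument on the tail chains starting at level $N$, producing Hardy constants $C_N$ for the Dirichlet form restricted to $G\setminus B_\rho(x_0,N)$. The finiteness of the full Feller sum forces the tail sums $\sum_{n\geq N} (k^+(n)\mu_n)^{-1}\sum_{N\leq j\leq n}\mu_j$ to vanish as $N\to\infty$, which via the quantitative form of Muckenhoupt translates to $C_N\to\infty$. Invoking the formula $\lambda_0^{ess}(\Gamma,\pi,\theta) = \lim_k \lambda_0^{G\setminus G_k}(\Gamma,\pi,\theta)$ recalled in the excerpt, we conclude $\lambda_0^{ess} = \infty$, so $\sigma_{ess} = \varnothing$.

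The main obstacle is the quantitative strengthening at the last step: one must show the Muckenhoupt constants $C_N^{-1}$ genuinely vanish, not merely remain bounded. The cleanest route is to realize the Hardy constant as the operator norm of an explicit summation (Hardy-type) kernel and observe that its tails vanish in operator norm whenever the Feller series converges. A secondary technical subtlety is handling $H_0^\perp$ without letting the angular sector introduce spurious essential spectrum; the standard device is to decompose angular modes sphere by sphere, each living on a finite set, and check uniform coercivity using the weak-symmetry assumption.
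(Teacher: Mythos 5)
First, a point of comparison: the paper does not prove this theorem at all — it is quoted as an external result of Keller--Lenz--Vogt--Wojciechowski (\cite{KLVW}), and the paper immediately moves on to discuss its consequences. So your proposal can only be measured against the cited literature and on its own merits. Your skeleton — sphere-averaging reduction to a birth--death (Jacobi) operator $J$, Feller's explosion criterion, a discrete Muckenhoupt/Hardy inequality, and the exhaustion formula $\lambda_0^{ess}=\lim_N\lambda_0^{G\setminus B(x_0,N)}$ — is a legitimate route, and the radial half of it is essentially correct: with $\mu_n:=\theta(S_n)$ and $w_n:=\pi(S_n,S_{n+1})=k^+(n)\mu_n$, the Muckenhoupt constant $B_N:=\sup_{M\ge N}\bigl(\sum_{n=N}^{M}\mu_n\bigr)\bigl(\sum_{j\ge M}w_j^{-1}\bigr)$ is dominated by the tail $\sum_{j\ge N}w_j^{-1}\sum_{i\le j}\mu_i$ of the convergent Feller series, so the radial Dirichlet gaps on complements of balls do blow up. One slip there is harmless but should be fixed: the Hardy inequality is \emph{not} equivalent to convergence of the Feller series, only implied by it (take $\mu_n\equiv 1$, $w_n=n^2$: Muckenhoupt's condition holds while the Feller series diverges); you need, and have, only that one implication.

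The genuine gap is your treatment of $H_0^\perp$. Weak spherical symmetry constrains only the summed rates $\theta_x^{-1}\pi(x,S_{n\pm1})$; it imposes nothing on the intra-sphere edges or on the pattern of cross edges, so $H_0^\perp$ does \emph{not} split into ``angular modes living on individual spheres'': the Dirichlet form does not decompose over spheres, and coercivity of sphere-supported mean-zero functions (which does hold, with constant $k^+(n)+k^-(n)$) implies nothing about general elements of $H_0^\perp$. Concretely, on a spherically symmetric tree whose root has two children, the function equal to $+g(n)$ on one child's subtree and $-g(n)$ on the other is mean-zero on every sphere, is spread over all spheres, and has Rayleigh quotient comparable to the radial gap, hence bounded, even though $k^+(n)+k^-(n)\to\infty$; any sphere-by-sphere coercivity device would ``prove'' the opposite. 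What you actually need is $\inf\sigma(-\mathcal{L}_\theta|_{H_0^\perp})\ge\lambda_0(J)$ together with its Dirichlet-tail analogue, and that requires an argument tracking the coupling between consecutive spheres. A clean fix: for arbitrary $f$ set $b_n:=\bigl(\theta(S_n)^{-1}\sum_{x\in S_n}\theta_x f(x)^2\bigr)^{1/2}$; discarding intra-sphere edges and applying Cauchy--Schwarz to each cross-sphere block (this is exactly where weak spherical symmetry enters, since $\sum_{x\in S_n}\pi(x,S_{n+1})f(x)^2=w_nb_n^2$ and $\sum_{y\in S_{n+1}}\pi(y,S_n)f(y)^2=w_nb_{n+1}^2$) gives
\begin{equation*}
\mathcal{E}(f,f)\ \ge\ \sum_n w_n\,(b_{n+1}-b_n)^2, \qquad \|f\|^2_{L^2(\theta)}=\sum_n \mu_n b_n^2,
\end{equation*}
which reduces both assertions — the positive gap and, applied to $f$ vanishing on balls, the emptiness of the essential spectrum — to your one-dimensional Hardy estimates, with no separate angular analysis needed. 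Alternatively, one can follow the cited literature and bound $\lambda_0^{G\setminus B(x_0,N)}$ below via radial positive supersolutions $(\mathcal{L}_\theta+\alpha_N)u\le 0$ on $G\setminus B(x_0,N)$ (the discrete Allegretto--Piepenbrink theorem), which again controls the full operator rather than only the radial sector. A last, smaller point: the equivalence of explosion of the graph walk with explosion of the projected birth--death chain deserves a sentence (spheres are finite and the in/out rates are radial, so the radial coordinate is itself Markov and must diverge at any explosion time), rather than being asserted.
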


In fact, their bound gives a quantitative estimate on the bottom of the spectrum in terms of certain volume growth quantities different than the ones considered in this paper. \\

Restricting for a moment to the setting of weakly spherically symmetric graphs, there is a large gap between the minimum possible volume growth for stochastic incompleteness and the minimum possible volume growth for discreteness of the spectrum.  In previous work of the author (see \cite{F2}), it was proven that stochastic completeness is implied by the estimate

\begin{equation*}
V_\rho(x_0,r) \leq Ce^{cr^2\log r}
\end{equation*}
\\
for some adapted metric $\rho$.  In general $r^2$ cannot be replaced with $r^{2+\varepsilon}$ and $\log r$ cannot be replaced with $(\log r)^{1+\varepsilon}$.  On the other hand, Examples 2 and 3 give examples where the essential spectrum is empty and

\begin{equation*}
V_\rho(x_0,r) \leq Ce^{cr^{1+\varepsilon}}.
\end{equation*}
\\
Consequently, the adapted volume growth thresholds for stochastic incompleteness and absence of essential spectrum are in general very different.  It would be interesting to know whether stochastic incompleteness is linked with the bottom of the spectrum being positive and the essential spectrum being empty for general weighted graphs.  However, it is not clear that volume growth techniques are relevant to this question, since there exist stochastically complete graphs with arbitrarily large volume growth.

\section{Examples}

{\bf 1.}  $k-$regular tree, $k\geq 3$.  We let $T_k$ denote the $k-$regular tree, which we equip with the standard weights.  In \cite{Br2}, \cite{Su}, it was proven that

\begin{equation} \label{botspec}
\lambda_0(T_k,\pi) = 1-\frac{2\sqrt{k-1}}{k}.
\end{equation}
\\
This graph satisfies $\mu_d(\pi) = \log(k-1)$.  Consequently, Theorem~\ref{thm1} yields

\begin{equation*}
\lambda^{ess}_0(\Gamma,\pi) \leq \frac{(1-\exp(\frac{1}{2}\mu_d(\pi)))^2}{1+\exp(\mu_d(\pi))} = 1-\frac{2\sqrt{k-1}}{k},
\end{equation*}
\\
so that there is equality in Theorem~\ref{thm1}.  This was observed earlier in \cite{Fu1}. \\

Similarly, for $\mathcal{L}_{\mathds{1}}$, $k-$regularity implies $k\mathcal{L}_{\pi} = \mathcal{L}_{\mathds{1}}$, and hence \eqref{botspec} implies

\begin{equation*}
\lambda_0(T_k,\mathds{1}) = k\left(1-\frac{2\sqrt{k-1}}{k}\right) = k-2\sqrt{k-1}.
\end{equation*}
\\
We compute the volume growth in the adapted metric $\frac{1}{\sqrt{k}}d$.  In this metric, two vertices at distance $r$ are at distance $\sqrt{k}r$ in the graph metric, and consequently $|B_{\frac{1}{\sqrt{k}}d}(x_0,R)| = \sum^{\sqrt{k}R}_{j=0} (k-1)^j$, from which it follows immediately that

\begin{equation*}
\mu_{\frac{1}{\sqrt{k}}d}(\mathds{1}) = \sqrt{k}\log(k-1).
\end{equation*}
\\
Again, Theorem~\ref{thm1} yields

\begin{equation*}
\lambda^{ess}_0(\Gamma,\mathds{1}) \leq k\cdot \frac{(1-\exp(\frac{1}{2\sqrt{k}}\mu_{\frac{1}{\sqrt{k}}d}(\mathds{1})))^2}{1+\exp(\frac{1}{\sqrt{k}}\mu_{\frac{1}{\sqrt{k}}d}(\mathds{1}))}  = k-2\sqrt{k-1},
\end{equation*}
\\
so that Theorem~\ref{thm1} may yield sharp results for the operator $\mathcal{L}_{\mathds{1}}$ also. \\

Note also that for large $k$, $\lambda_0(T_k,\mathds{1}) \geq \frac{k}{2}$, whereas $VG(T_k,\mathds{1},\frac{1}{\sqrt{k}}d) = \sqrt{k}\log(k-1)$.  Consequently, for any $\varepsilon\in (0,2)$ and any $C>0$, if one takes $K=K(\varepsilon,C)$ large enough, then

\begin{equation*}
\lambda_0(T_K,\mathds{1}) > C\left(VG(T_K,\mathds{1},\frac{1}{\sqrt{k}}d)\right)^{2-\varepsilon},
\end{equation*}
\\
so that Theorem~\ref{thm2} is asymptotically sharp. \\

{\bf 2.} Birth-death process:  Set $\Gamma := (\mathbb{Z}_+,E_{nn})$, where $E_{nn}$ denotes the set of nearest-neighbor edges, and set $\pi_\alpha(\{n,n+1\}) := (n+1)^2\log^\alpha_+(n+1)$ for $\alpha\in(-\infty,2)$, where $\log_+(x) := \log(x)\vee 1$. \\

Since $\pi_\alpha(B_d(0,r)) \leq Cr^{3+\varepsilon}$ for large $r$, it follows that $\mu_d(\pi_\alpha) = 0$ for any $\alpha\in (-\infty,2)$, and hence $\lambda^{ess}_0(\Gamma,\pi_\alpha,\pi_\alpha) = 0$ for all $\alpha\in (-\infty,2)$. \\
 
However, things are quite different for the unbounded operator $\mathcal{L}_{\mathds{1}}$.  In this setting, we use the adapted metric $\frac{1}{\sqrt{2}}d_E$.  As $R\to\infty$, we have

\begin{equation*}
\frac{1}{\sqrt{2}}d_E(0,R) = \frac{1}{\sqrt{2}}\sum^R_{j=1} \frac{1}{j\log_+^{\alpha/2}(j)} \sim \frac{\sqrt{2}}{2-\alpha}\log^{1-\alpha/2}(R),
\end{equation*}
\\
from which it follows that

\begin{equation*}
\log |B_{\frac{1}{\sqrt{2}}d_E}(0,r)| \asymp \left(\frac{2-\alpha}{\sqrt{2}}r\right)^{2/(2-\alpha)},
\end{equation*}
\\
and hence

\begin{equation*}
\mu_{\frac{1}{\sqrt{2}}d_E}(\mathds{1}) =
\begin{cases}
0 &\text{if }\alpha<0, \\
\sqrt{2} &\text{if }\alpha=0, \\
\infty &\text{if }0<\alpha<2. \\
\end{cases}
\end{equation*}
\\
We conclude that $\lambda^{ess}_0(\Gamma,\pi_\alpha,\mathds{1}) = 0$ for $\alpha<0$. \\

The case $\alpha=0$, which corresponds to exponential volume growth, is more subtle.  From Theorem~\ref{thm2}, we have 

\begin{equation*}
\lambda^{ess}_0(\Gamma,\pi_0,\mathds{1}) \leq \frac{1}{8}(\mu_{\frac{1}{\sqrt{2}}d_E}(\mathds{1}))^2 = \frac{1}{4}.
\end{equation*}

We will show that $\lambda^{ess}_0(\Gamma,\pi_0,\mathds{1}) \geq \lambda_0(\Gamma,\pi_0,\mathds{1})\geq \frac{1}{9}$.  To do this, it suffices to show that if $f\in C_c(\mathbb{Z}_+)$,

\begin{equation} \label{posspec1}
\sum_{n\in\mathbb{Z}_+} (n+1)^2(f(n)-f(n+1))^2 \geq \frac{1}{9}\sum_{n\in\mathbb{Z}_+} f^2(n).
\end{equation}
\\
For $n\in\mathbb{Z}_+$, we define $g\in C_c(\mathbb{Z}_+)$ by $g(n) := (n+1)(f(n)-f(n+1))$.  Since $f(n)\to 0$ as $n\to\infty$, we can reconstruct $f$ from $g$ as 

\begin{equation*}
f(n) = \sum_{j\geq n} \frac{1}{j+1}g(j);
\end{equation*}
\\
consequently, \eqref{posspec1} is equivalent to 

\begin{equation*}
9\sum_{n\in\mathbb{Z}_+} g^2(n) \geq \sum_{n\in\mathbb{Z}_+}\left(\sum_{j\geq n} \frac{1}{j+1}g(j)\right)^2,
\end{equation*}
\\
so it suffices to show that the operator $T:C_c(\mathbb{Z}_+)\to C_c(\mathbb{Z}_+)$ defined by

\begin{equation*}
(Tf)(n) := \sum_{j\geq n}\frac{1}{j+1}f(j)
\end{equation*}
\\
is bounded on $L^2(\mathds{1})$, with $\|T\|\leq 3$.  This is accomplished through an application of Schur's test.  Let $u(n) := (n+1)^{-1/2}$.  Then for $n\in\mathbb{Z}_+$, simple estimation using the integral test shows that 

\begin{align*}
(Tu)(n) &\leq 3u(n), \\ 
(T^*u)(n) &\leq 3u(n),
\end{align*}
\\
and hence Schur's test implies that $\|T\|\leq \sqrt{3\cdot 3} = 3$. \\

A similar calculation shows the absence of essential spectrum for $0<\alpha<2$.  In this case, we consider the graphs $(\Gamma_k,\pi_{\alpha,k})$, where $\Gamma_k$ is the subgraph of $\Gamma$ induced by the subset $\{j\in\mathbb{Z}_+:k\geq j\}$ and $\pi_{\alpha,k}$ are the restriction of the edge weights $\pi_\alpha$ to $\Gamma_k$.  Proceeding as above, we consider the operator

\begin{equation*}
(T_\alpha f)(n) := \sum_{j\geq n}\frac{1}{(j+1)\log^{\alpha/2}_+(j+1)}f(j),
\end{equation*}
\\
on $(\Gamma_k,\pi_{\alpha,k})$.  If $u_\alpha(n) := (n+1)\log^{\alpha/2}_+(n+1)$, then

\begin{align*}
(T_\alpha u)(n) &\leq \frac{3}{\log^{\alpha/2}_+(k+1)}u_\alpha(n), \\ 
(T^*_\alpha u)(n) &\leq \frac{3}{\log^{\alpha/2}_+(k+1)} u_\alpha(n),
\end{align*}
\\
so that on $(\Gamma_k,\pi_{\alpha,k})$, $\|T_\alpha\| \leq \frac{3}{\log^{\alpha/2}_+(k+1)}$, and consequently $\lambda_0(\Gamma_k,\pi_{\alpha,k},\mathds{1}) \geq \frac{\log^{\alpha}_+(k+1)}{9}$. \\

Since $\lambda^{ess}_0(\Gamma,\pi_\alpha,\mathds{1}) = \lim_{k\to\infty} \lambda_0(\Gamma_k,\pi_{\alpha,k},\mathds{1}) = \infty$, we conclude that the essential spectrum is empty for $0<\alpha<2$. \\

This example shows that even for graphs on which the VSRW behaves very differently from the CSRW (and, in particular, the operator $\mathcal{L}_{\mathds{1}}$ is unbounded), the critical behavior of the bottom of the spectrum for the operator $\mathds{1}$ is correctly predicted by computing the volume growth in a metric adapted to $\mathcal{L}_{\mathds{1}}$.  Moreover, the relevant volume growth estimate for unbounded operators, Theorem~\ref{thm2}, is analogous to Brooks' result for manifolds. \\

In addition to the present results on the spectra of weighted graphs, other phenomena for the operators $\mathcal{L}_\theta$ are correctly predicted by estimating volume growth in adapted metrics.  For example, in \cite{F2}, a volume growth criterion for stochastic completeness (or non-explosiveness) of the VSRW is established.  This result also employs adapted metrics, and (as with the present criteria) is analogous to volume growth criteria for manifolds.  When applied to this example, the results of \cite{F2} show that on $(\Gamma_\alpha,\pi_\alpha)$ the VSRW is non-explosive if $\alpha\leq 1$, and it is not difficult to show directly that the VSRW is explosive if $\alpha>1$. \\

It seems that the condition of adaptedness is in many respects the correct one for analyzing continuous-time simple random walks on graphs, especially if the corresponding graph Laplacian is unbounded.  These metrics allow one to prove various sharp estimates which are not possible to obtain using only the graph metric (such as heat kernel estimates and volume growth thresholds for various phenomena), and typical results using these metrics are directly analogous to previous results for manifolds or for metric measure spaces. These metrics are capable of detecting `fine' changes in geometry which are not visible to the graph metric (such as modifying the exponent on the logarithm in this example) but which may nevertheless have a profound effect on analytic properties of the Laplacian or the behavior of the associated random walk. \\

{\bf 3.}  Spherically symmetric trees with increasing rate of branching: \\

Let $\Gamma_\alpha$ be a tree rooted at $x_0$, with all vertices at a graph distance of $r$ from $x_0$ having $k(r):= \lfloor 2r^\alpha \rfloor$ neighbors at a graph distance of $r+1$ from $x_0$ for $0\leq \alpha<2$.  We equip these graphs with the standard weights, and consider the operator $\mathcal{L}_{\mathds{1}}$. \\

Using the adapted metric $d_V$, we have that if $d(x_0,x_R)=R$ and $0<\alpha<2$, then

\begin{equation*}
d_V(x_0,x_R) \asymp \sum^R_{j=1} \frac{1}{j^{\alpha/2}} \asymp R^{1-\alpha/2}.
\end{equation*}
\\
Consequently, if $d_V(x_0,y)\asymp r$, then $d(x_0,y)\asymp r^{2/(2-\alpha)}$.  As well, $|B_d(x_0,r)|=\sum^r_{j=0}\prod^{j}_{i=0}k(i)$, so that

\begin{align*}
\log |B_{d_V}(x_0,r)| \asymp \sum^{r^{2/(2-\alpha)}}_{j=1} \log j^\alpha \asymp \begin{cases} r &\text{if }\alpha=0, \\ r^{2/(2-\alpha)}\log r &\text{if }0<\alpha<2.\end{cases}
\end{align*}
\\ 
Since we have exponential volume growth for $\alpha=0$ in the $d_V$ metric, the essential spectrum of $\mathcal{L}_{\mathds{1}}$ on $\Gamma_0$ is nonempty. \\

For $0<\alpha<2$, since $\Gamma_\alpha$ has positive Cheeger constant, Theorem 2 of \cite{K} implies that the essential spectrum is empty.  In particular, for any $\varepsilon>0$ there exists some $\alpha_0\in (0,2)$ such that $\Gamma_{\alpha_0}$ satisfies $VG(\Gamma_{\alpha_0},\mathds{1},d_V) \leq Ce^{cr^{1+\varepsilon}}$ and the essential spectrum of $\mathcal{L}_{\mathds{1}}$ on $\Gamma_{\alpha_0}$ is nonempty. \\

Consequently, this example, as well as the results in the previous example on absence of essential spectrum, shows that Corollary~\ref{cor2} is optimal in certain settings. \\

{\bf Acknowledgements:}  This research was completed at the Statistical Laboratory at the University of Cambridge during a visit from by author.

\end{document}